\newtheorem{theorem}{Theorem}[section]
\newtheorem{corollary}[theorem]{Corollary}
\newtheorem{lemma}[theorem]{Lemma}
\newtheorem{question}[theorem]{Question}
\newtheorem{problem}[theorem]{Problem}
\theoremstyle{definition}
\newtheorem{definition}[theorem]{Definition}
\newcommand{\R}{\mathds{R}}
\newcommand{\rr}{\mathds{R}}
\newcommand{\zz}{\mathds{Z}}
\title[General Yao--Yao partition and central transversal theorem]{Generalizations of the Yao--Yao partition theorem and the central transversal theorem}
\author[Manta]{Michael N. Manta}\address{California Institute of Technology, Pasadena, CA 91125} 
\email{mmanta@caltech.edu}
\author[Sober\'on]{Pablo Sober\'on}\address{Baruch College, City University of New York, New York, NY 10010} 
\email{pablo.soberon-bravo@baruch.cuny.edu}
\thanks{Sober\'on's research is supported by NSF grant DMS 2054419 and a PSC-CUNY TRADB52 award.}
\begin{document}

\begin{abstract}
We generalize the Yao--Yao partition theorem by showing that for any smooth measure in $R^d$ there exist equipartitions using $(t+1)2^{d-1}$ convex regions such that every hyperplane misses the interior of at least $t$ regions. In addition, we present tight bounds on the smallest number of hyperplanes whose union contains the boundary of an equipartition of a measure into $n$ regions. We also present a simple proof of a Borsuk--Ulam type theorem for Stiefel manifolds that allows us to generalize the central transversal theorem and prove results bridging the Yao--Yao partition theorem and the central transversal theorem.
\end{abstract}

\maketitle

\section{Introduction}

Mass partition problems study how one can split finite sets of points or measures in Euclidean spaces. They connect topological combinatorics and computational geometry \cites{matousek2003using, Zivaljevic:2017vi, roldan2021survey}. We say that a finite family $P$ of subsets of $\rr^d$ is a \textit{convex partition} of $\rr^d$ if the union of the sets is $\rr^d$, the interiors of the sets are pairwise disjoint, and each set is closed and convex. For a finite measure $\mu$ in $\R^d$, we say that a convex partition of $\rr^d$ is an \textit{equipartition} of $\mu$ if each set in $P$ has the same $\mu$-measure.

 In 1985, Yao and Yao proved the following theorem, motivated by applications in geometric range queries \cite{yao1985general}.

\begin{theorem}[Yao and Yao 1985]\label{thm:yaoyao}
For any finite measure $\mu$ in $\R^d$ that is absolutely continuous with respect to the Lebesgue measure, there exists a convex equipartition of $\R^d$ into $2^d$ regions such that any hyperplane avoids the interior of at least one region.
\end{theorem}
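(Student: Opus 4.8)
The plan is to induct on the dimension $d$, realizing the partition of $\R^d$ as a cone over a $(d-1)$-dimensional Yao--Yao partition and then reflecting through the apex. The base case $d=1$ is the median split of $\R$ into the rays $(-\infty,m]$ and $[m,\infty)$, each of mass $\tfrac12\mu(\R)$; a point lies in the interior of at most one ray and so misses the other. For the step, assume the theorem in dimension $d-1$. Given $\mu$ on $\R^d$, I would fix an apex $v$ and an affine hyperplane $G\cong\R^{d-1}$ with $v\notin G$, apply the inductive partition to the measure that $\mu$ induces on $G$ to obtain $2^{d-1}$ convex pieces $R_1,\dots,R_{2^{d-1}}$, and set $C_i^{+}=\{v+t(x-v):x\in R_i,\ t\ge 0\}$ together with the opposite cones $C_i^{-}$ obtained by taking $t\le 0$. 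The resulting $2^{d}$ cones are closed, convex, tile $\R^d$, and come in $2^{d-1}$ antipodal pairs $\{C_i^{+},C_i^{-}\}$.

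The combinatorial core is that every query hyperplane $H$ avoids the interior of some cone, which I would verify by distinguishing whether $H$ contains the apex. If $v\in H$, then a ray from $v$ meets $H$ only at $v$, so $H$ meets $\operatorname{int}C_i^{\pm}$ exactly when the hyperplane $\ell=H\cap G$ of $G$ meets $\operatorname{int}R_i$; the inductive hypothesis yields an index $j$ with $\ell\cap\operatorname{int}R_j=\varnothing$, whence $H$ misses $C_j^{+}$. If $v\notin H$, a cone is missed precisely when its set of directions lies in the open hemisphere $D\subset S^{d-1}$ of rays from $v$ that stay on the side of $H$ containing $v$; the direction sets of the $2^{d}$ cones form an arrangement refining the suspension of the lower-dimensional one, and the standard count that a hyperplane crosses at most $2^{d}-1$ of the $2^{d}$ cells forces one direction set into any such hemisphere. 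The one-dimensional and planar cases make both verifications transparent and serve as a sanity check.

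The main obstacle is achieving the equipartition, since the apex $v$ and the directions along which the construction cuts are coupled: moving $v$ changes the induced measure on $G$, hence the inductive partition. I would encode a candidate as a point of a configuration space consisting of the apex $v\in\R^d$ together with a flag of cutting directions, the latter naturally parametrized by a Stiefel manifold, and define a test map $f$ into $\R^{2^{d}-1}$ recording the differences between the $2^{d}$ cone masses and the target $2^{-d}\mu(\R^d)$. Absolute continuity of $\mu$ makes these masses continuous in the parameters, so an equipartition is exactly a zero of $f$. The delicate point is to force such a zero: one must make the configuration space compact, render $f$ equivariant under the $\zz/2$ symmetry that swaps each antipodal pair $C_i^{+}\leftrightarrow C_i^{-}$ (iterated through the recursion), and match the dimension and representation so that no equivariant nowhere-zero map can exist. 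This is precisely the step for which a Borsuk--Ulam-type theorem on Stiefel manifolds, such as the one announced in the abstract, is the natural and essential tool.
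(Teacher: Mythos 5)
Your construction contains a structural error that Yao and Yao's recursion is specifically designed to avoid: by declaring $C_i^-$ to be the reflection of $C_i^+$ through the apex, you force the partition to be centrally symmetric, and Yao--Yao partitions are centrally symmetric only for $d\le 2$. For $d\ge 3$ the genuine construction bisects $\mu$ by a hyperplane $H$ through the center, projects $\mu_+$ and $\mu_-$ onto $H$ along an oblique direction $v$, and recursively builds two \emph{independent} partitions of the two projected measures, coupled only by the requirement that their centers coincide. That independence is exactly what makes the equipartition achievable: the parameters form a binary tree of projection vectors (exponentially many, matching the $2^d-1$ mass constraints), the inductive hypothesis makes every mass constraint automatic, and the single remaining condition $C(p_v(\mu_+))=C(p_v(\mu_-))$ is a $(d-1)$-dimensional equation solved by varying $v$ over the hemisphere $B^{d-1}$ and applying a degree (Borsuk--Ulam) argument to the map $v\mapsto \bigl(C(p_v(\mu_+))-C(p_v(\mu_-))\bigr)/\lVert C(p_v(\mu_+))-C(p_v(\mu_-))\rVert \in S^{d-2}$. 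Your antipodally symmetric family, by contrast, is parametrized by the apex plus a flag of directions, i.e.\ by $d+d(d-1)/2$ parameters in total. Even granting that your recursion equiparts the top half automatically, you still owe the bisection condition plus the $2^{d-1}-1$ equations forcing the \emph{reflected} top partition to equipartition the bottom half; since $d(d+1)/2<2^{d-1}$ for $d\ge 5$, your configuration space is provably too small, and for $d=3,4$ you give no argument that a solution exists (it is not clear that one does).

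The topological plan fails for the same reason. You propose a test map from (apex) $\times$ (Stiefel manifold) into $\rr^{2^d-1}$: already for $d=3$ the domain has dimension $3+\dim V_2(\rr^3)=6$ while the target has dimension $7$, and the gap grows exponentially in $d$. When a finite group acts freely on the domain, an equivariant map into a space of strictly larger dimension can be perturbed equivariantly to be nowhere zero, so no Borsuk--Ulam-type theorem can force a zero; indeed, the Stiefel-manifold theorem used in this paper (Theorem \ref{thm:stiefel-BU}) requires the target dimension to equal $\dim V_k(\rr^d)$ exactly, which is polynomial in $d$, never $2^d-1$. The parts of your proposal that are sound --- the base case, the cone structure, and the avoidance argument for hyperplanes through the apex --- are not where the difficulty lies; note also that for hyperplanes missing the apex your ``standard count'' of crossed cells is not a proof, although for a centrally symmetric partition it could be repaired by translating the hyperplane to the apex and using the symmetry. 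The entire content of the theorem is the coincidence-of-centers mechanism (equipartition achieved recursively, topology used only to match two centers), and the scheme you propose in its place cannot work as stated.
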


For $d=2$ these partitions are made by two lines, but they become much more involved in higher dimensions. In the original proof, the measure has to be the integral of a twice differentiable positive density function. Lehec \cite{lehec2009yao} found an alternate proof that weakened the condition to every hyperplane having measure 0. In addition to the original applications by Yao and Yao, the discrete version of Theorem \ref{thm:yaoyao} has been applied to geometric Ramsey questions \cite{alon2005crossing}.

The proof technique for the Yao--Yao partition theorem is quite different from standard results in mass partition problems. The question of how far the technique Yao and Yao used can be pushed has been relatively unexplored. In 2014, Rold\'an-Pensado and Sober\'on extended the theorem so that any hyperplane avoids two convex regions and proved a general upper bound in the case of avoiding $t$ convex regions in $\R^2$ \cite{roldan2014extension}. They also provide improved asymptotic bounds when $d$ is fixed and $t$ tends to infinity. The following problem remains open.

\begin{problem}\label{problem:generalizedyao}
Let $t, d$ be positive integers. Find the smallest value $n$ such that for any finite measure $\mu$ in $\R^d$ that is absolutely continuous with respect to the Lebesgue measure there exists a convex equipartition of $\mu$ into $n$ parts such that every hyperplane avoids the interior of at least $t$ regions.
\end{problem}

Currently, the best bound for fixed $t$ is the naive bound that comes from Theorem \ref{thm:yaoyao}. Take any Yao--Yao partition in $\R^d$ and partition each cell using $t-1$ parallel hyperplanes. This gives us a convex equipartition such that any hyperplane avoids the interior of at least $t$ regions, so $n \leq t\cdot2^{d}$. In this paper, we improve the naive bound with the following theorem that generalizes Theorem \ref{thm:yaoyao} and one of the main results from \cite{roldan2014extension}.

\begin{theorem}\label{thm:generalyao}
Let $t,d$ be positive integers. For any finite measure $\mu$ in $\R^d$, absolutely continuous with respect to the Lebesgue measure, there exists a convex partition of $\rr^d$ into $(t+1)2^{d-1}$ regions of equal $\mu$-measure such that every hyperplane avoids the interior of at least $t$ regions.
\end{theorem}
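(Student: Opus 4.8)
The plan is to prove the statement by induction on the dimension $d$, extending the recursive technique behind Theorem \ref{thm:yaoyao}. The target count factors as $(t+1)\cdot 2^{d-1}$, and I would read this factorization as the recipe for the construction: reserve one distinguished direction in which the measure is cut into $t+1$ consecutive pieces, and handle the remaining $d-1$ directions with the Yao--Yao doubling that contributes the factor $2^{d-1}$. This is exactly where the improvement over the naive bound $t\cdot 2^{d}$ comes from: the naive partition spends a full factor $2^{d}$ on the Yao--Yao structure and an independent factor $t$ on the refinement, whereas here one of the $d$ Yao--Yao doublings is traded for passing from $t$ to $t+1$ slices in the distinguished direction. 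The base case $d=1$ is immediate: partition $\R$ into $t+1$ intervals of equal $\mu$-measure, and observe that a point (the only kind of hyperplane in $\R^1$) lies in the interior of at most one interval, hence misses the interiors of at least $t$.

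For the inductive step I would set up the Yao--Yao continuity machinery. Assuming the result in dimension $d-1$, I would parametrize a family of candidate partitions of $\rr^d$ by a center, or a distinguished hyperplane $H_0$ together with an apex, and, on $H_0\cong\rr^{d-1}$, an inductively produced partition that is then coned and extended to both sides of $H_0$, doubling the number of regions from $(t+1)2^{d-2}$ to $(t+1)2^{d-1}$. The equal-measure requirement is encoded as the vanishing of a balancing map from the configuration space to a Euclidean space recording the differences of the region measures, and I would obtain a configuration on which this map vanishes by a degree or Borsuk--Ulam-type argument, using the smoothness of $\mu$ to guarantee continuity and to discard degenerate configurations. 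Producing such a balanced configuration while simultaneously preserving the rigid structural symmetry demanded by the next step is the most delicate part of the argument.

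Granting existence, the miss property is verified by separating hyperplanes into two regimes. A hyperplane parallel to the distinguished slicing direction reduces the problem to the $t+1$ slices and, as in the base case, is forced to miss at least $t$ of them. A hyperplane transverse to $H_0$ meets $H_0$ in a $(d-2)$-flat to which the inductive hypothesis applies: that flat misses at least $t$ cells of the $(d-1)$-dimensional partition, and the cone and extension structure is arranged so that each missed cell lifts to a missed region of $\rr^d$. The mechanism that makes the count come out to exactly $t$ in the tightest, oblique, direction is an antipodal-symmetry parity count: for an antipodally symmetric refinement, every closed halfspace through the apex contains a parity-determined number of whole cones, exactly as a closed semicircle contains exactly $t$ whole arcs of any antipodally symmetric partition of the circle into $2(t+1)$ arcs.

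I expect the oblique case to be the main obstacle, for two reasons. First, it is precisely here that the simplest constructions break: a naive product that slices $2^{d-1}$ unbounded cylinders into $t+1$ horizontal slabs is defeated by a tilted hyperplane that sweeps through every slab on both sides, so the coning must be genuinely Yao--Yao-like and the slicing heights globally coherent rather than chosen independently block by block. Second, coherence of the slicing and the equal-measure balancing pull in opposite directions, since rigid coherent cuts are overdetermined for equipartition while freely chosen cuts destroy the miss property, so the topological existence argument must thread between them, which is the crux of the whole proof.
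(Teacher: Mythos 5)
Your reading of the factorization $(t+1)2^{d-1}$ does match the geometry of the paper's construction (a ``frame'' of $2^{d-1}$ convex cells, each cut into $t+1$ pieces along the distinguished direction $u_1$), and your base case and the easy hyperplane regimes are fine. But the plan has two concrete gaps. The first is in the oblique case of your induction on $d$. If $L$ is transverse to the halving hyperplane $H_0$ and $F=L\cap H_0$, a lifted cell $p_v^{-1}(C)\cap H_+$ is missed by $L$ if and only if $C$ is missed by $p_v(L\cap H_+)$, which is an entire open side of $F$ inside $H_0$, not $F$ itself (and the relevant sides for the top and bottom partitions are opposite). So ``$F$ misses at least $t$ cells of the $(d-1)$-dimensional partition'' does not imply that $L$ misses the corresponding lifted cells; the inductive hypothesis as you state it is too weak to close the induction, and you would need a sided, half-space strengthening of it, coupled across the two partitions $P_+$ and $P_-$. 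This is exactly the difficulty the paper circumvents by inducting on $t$ rather than on $d$: it introduces $(\alpha,\beta)$-partitions, whose cells pair into left/right pieces $(A_i,B_i)$ with a directional miss property (a hyperplane crossing the center line at or to the right of the center misses some $A_i$; at or to the left, some $B_i$), and shows (Lemmas \ref{lemma:structure1} and \ref{lemma:structure2}, Corollary \ref{cor:restriction}) that the frame is independent of $(\alpha,\beta)$ and survives restriction to $B=\bigcup_i B_i$, so a $(1/(t+1),t/(t+1))$-partition of $\mu$ can be nested with a multicenter partition of $\mu|_B$ with $t-1$ centers inside the same frame; the count $1+(t-1)$, or $t$, or $t+1$ then falls out case by case.

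Second, the mechanism you propose for the tight oblique regime --- an antipodally symmetric fan of cones at a single apex, with the semicircle parity count --- is incompatible with the equipartition requirement. Already for $d=2$, such a fan is a set of $t+1$ concurrent lines, carrying $t+3$ degrees of freedom, while an equipartition into $2(t+1)$ equal parts imposes $2t+1$ constraints; for $t\ge 3$ this is overdetermined, and generic measures admit no such equipartition (the Buck--Buck result cited in the paper marks exactly the threshold case $t+1=3$ where the counts balance). Note also that already for $t=1$ and $d\ge 3$ the Yao--Yao partition is not an antipodally symmetric fan, since the upper and lower sub-partitions are built from different projected measures. The paper's construction has no single apex and no antipodal symmetry: the $t$ centers are spread along a line $\ell$, each wing cuts only inside its own frame cell, and the tight count comes from the left/right bookkeeping above (when $L\cap\ell$ lies between consecutive centers, $L$ misses the appropriate numbers of left regions and right regions), not from parity. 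So the crux you flagged --- threading the rigidity of coherent cuts against the equal-measure balancing --- is not resolved by your plan, and in the single-apex form you propose it cannot be.
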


This bound is exact in the case of $d=1$ and matches the known bounds for $d=2$ and any $t$ and for $t=1,2$ and any $d$ \cite{roldan2014extension}. 
Problem \ref{problem:generalizedyao} focuses on hyperplane transversals, so a natural question to ask is for the smallest number of regions needed to avoid all affine subspace transversals of other dimensions. In this paper, we study the case of lines. 

Because a line can pass through a hyperplane at most once, it is sufficient to obtain partitions whose boundaries are contained in the union of few hyperplanes. Some existing mass partitions problems exhibit convex equipartitions with few hyperplanes containing all boundaries.  One classic example is the Gr\"unbaum--Hadwiger--Ramos problem \cites{grunbaum1960partitions, Blagojevic:2018jc}, where the aim is to split simultaneously as many measures as possible into $2^k$ equal parts using $k$ hyperplanes. Another is a recent conjecture by Langerman \cites{barba2019sharing, Hubard:2019we}, which claims that \textit{any $dk$ measures can be simultaneously split into two equal parts by a chessboard coloring induced by $k$ hyperplanes}. The key difference with the problem we discuss here is that we don't require the hyperplanes to extend indefinitely.

\begin{problem}\label{problem:smallestkforn}
Given positive integers $n, d$, find the smallest integer $k$ such that any finite measure in $\R^d$ absolutely continuous with respect to the Lebesgue measure can be partitioned into $n$ convex regions of equal measure whose boundaries are contained in the union of at most $k$ hyperplanes.
\end{problem}

Using the classic ham sandwich theorem in a recursive argument we obtain the following bounds.

\begin{theorem}\label{thm:smallestkforn}
Let $n,d$ be positive integers. The following bounds hold for Problem \ref{problem:smallestkforn}
\[
\left \lceil \frac{n-1}{d} \right \rceil \leq k \leq \frac{n}{d} + (d-1)\log_2\left( \frac{n}{d}\right) + d-2.
\]
\end{theorem}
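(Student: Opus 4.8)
The two inequalities are independent, and I would establish them by separate arguments.

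\emph{Lower bound.} The plan is to exhibit a single measure that forces many hyperplanes, namely an absolutely continuous measure $\mu$ concentrated in a thin tube about the moment curve $\gamma(s)=(s,s^2,\dots,s^d)$, $s\in[0,1]$. Two observations drive the bound. First, every hyperplane meets $\gamma$ in at most $d$ points, since $\langle a,\gamma(s)\rangle=b$ is a polynomial equation of degree $d$ in $s$; hence a union of $k$ hyperplanes meets $\gamma$ in at most $dk$ points and cuts it into at most $dk+1$ arcs. Second, in any convex partition whose boundaries lie in $H_1\cup\dots\cup H_k$, each connected component of $\R^d\setminus(H_1\cup\dots\cup H_k)$ is contained in the interior of a single part, so each arc of $\gamma$ lies in one part. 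If $\mu$ gives positive mass to every part (as it must for an equipartition into $n$ parts of a positive measure) and this mass is carried by the curve, then every part contains a positive-length arc, whence $n\le dk+1$. Rearranging and using that $k$ is an integer gives $k\ge\lceil (n-1)/d\rceil$. I would make the second observation rigorous by choosing the tube radius small enough that the combinatorial arc structure is preserved, or by a limiting argument as the radius tends to $0$.

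\emph{Upper bound.} Here I would give a recursive construction from two elementary moves. \emph{Move A}: $j-1$ parallel hyperplanes cut any single measure into $j$ slabs of equal $\mu$-measure by the intermediate value theorem; in particular $d-1$ hyperplanes yield $d$ equal parts. \emph{Move B} (sharing via ham sandwich): given any $d$ of the current convex parts $R_1,\dots,R_d$, the ham sandwich theorem applied to $\mu|_{R_1},\dots,\mu|_{R_d}$ produces a single hyperplane bisecting all of them simultaneously; the remaining parts are left uncut, and every resulting piece stays convex. Thus an arbitrary family of $N$ current parts can all be bisected using only $\lceil N/d\rceil$ hyperplanes (padding with dummy measures when a group has fewer than $d$ members). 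Starting from the $d$ equal slabs of Move A and applying $m$ successive rounds of Move B---round $j$ doubling $d2^{j-1}$ parts into $d2^{j}$ parts with $2^{j-1}$ hyperplanes---produces a convex equipartition into $d2^{m}$ parts using $(d-1)+\sum_{j=1}^{m}2^{j-1}=n/d+d-2$ hyperplanes. This already meets the claimed bound when $n=d2^{m}$, since then $\log_2(n/d)=m$ contributes nothing.

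\emph{General $n$ and the main obstacle.} To reach an arbitrary $n$ I would run the same doubling process while recording, for each current part, the number of final parts it must still become, and halving these counts as evenly as possible at each round. After $t$ rounds every count lies in $\{\lfloor n/2^{t}\rfloor,\lceil n/2^{t}\rceil\}$, so at most one of the (at most two) values present in a round is odd. Parts whose count is even are bisected and may be shared $d$-at-a-time exactly as in Move B; parts whose count is the odd value require an uneven split, and since the ham sandwich theorem only delivers bisections, these cannot be shared and must be charged separately. The heart of the proof is to organize and aggregate these uneven cuts so that each of the $\log_2(n/d)$ halving scales costs at most $d-1$ additional hyperplanes---this is precisely what yields the $(d-1)\log_2(n/d)$ term on top of the $n/d+d-2$ coming from the balanced part. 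I expect this bookkeeping, controlling the unshareable uneven cuts scale by scale, to be the main obstacle; everything else is a direct assembly of Moves A and B.
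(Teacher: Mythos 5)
Your lower bound (a measure concentrated in a thin tube around the moment curve, giving $n\le dk+1$ and hence $k\ge\lceil (n-1)/d\rceil$) and your balanced upper bound ($n=d2^{m}$ via $d-1$ parallel slabs followed by rounds of shared ham-sandwich bisections) are correct and coincide with the paper's argument (the latter is exactly the paper's Lemma \ref{lem:d2xlemma}). The gap is the general-$n$ case, which you explicitly leave open, and the route you sketch for it does not work. In your ``halve all counts as evenly as possible'' scheme, it is true that at most one of the two values $\lfloor n/2^{t}\rfloor,\lceil n/2^{t}\rceil$ is odd, but the number of \emph{parts} carrying that odd value can be exponentially large. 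Take $n=3\cdot 2^{m}$ with any fixed $d\ge 4$ and $m$ large: for $t\le m$ every part at round $t$ has count $3\cdot 2^{m-t}$, so at round $m$ all $2^{m}=n/3$ parts have the odd count $3$, and each must receive a cut in ratio $1:2$. As you yourself note, unequal-ratio cuts cannot be shared through the ham sandwich theorem (simultaneous splitting of even two measures in a prescribed ratio $\neq 1/2$ by one hyperplane is false in general, e.g.\ for two concentric rotationally symmetric measures), so your scheme is forced to spend at least $n/3$ hyperplanes at that single scale --- far exceeding $n/d+(d-1)\log_2(n/d)+d-2$. So the bookkeeping you hope for (at most $d-1$ extra hyperplanes per scale) is not a missing detail; it is false for this scheme.

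The paper avoids this by making sure an uneven cut never propagates into the recursion: write $n=d2^{\alpha_1}+d2^{\alpha_2}+\cdots+\epsilon$ with $\alpha_1>\alpha_2>\cdots$ and $\epsilon<d$ (the binary expansion of $\lfloor n/d\rfloor$), peel off one block of measure $d2^{\alpha_i}\mu(\rr^d)/n$ per term using a single vertical hyperplane, run your balanced procedure (Move A, then shared bisections) independently inside each block, and cut the leftover $\epsilon$ parts with $\epsilon-1$ parallel hyperplanes. All unevenness is thus confined to the at most $\log_2(n/d)$ vertical cuts, and the $(d-1)\log_2(n/d)$ term in the statement is the price of the fresh $d-1$ parallel slabs needed inside each of the at most $\log_2(n/d)$ blocks --- not of uneven cuts spread over all scales. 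Redirecting your argument to this block decomposition turns your two moves into a complete proof; alternatively, sharing uneven cuts would require a tool beyond the ham sandwich theorem, such as the well-separated splitting theorem (Theorem \ref{thm:bhj}) that the paper only invokes for its sharper $d=2,3$ bounds.
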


For a fixed dimension, these bounds imply that $k = n/d +O(\log (n))$.  One consequence is the following Yao--Yao type corollary.

\begin{corollary}\label{coro-line-miss-many}
    Let $d$ be a fixed positive integer. For any positive integer $n$ and any measure $\mu$ in $\rr^d$ absolutely continuous with respect to the Lebesgue measure, there exists a convex partition of $\rr^d$ into $n$ regions of equal $\mu$-measure such that every line misses the interior of at least $(d-1)n/d - O(\log n)$ regions.
\end{corollary}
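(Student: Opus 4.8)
The plan is to combine Theorem \ref{thm:smallestkforn} with the elementary observation that a line crosses each hyperplane at most once. First I would apply Theorem \ref{thm:smallestkforn} to obtain a convex equipartition of $\mu$ into $n$ regions of equal measure whose boundaries lie in the union of $k$ hyperplanes $H_1,\dots,H_k$ with
\[
k \le \frac{n}{d} + (d-1)\log_2\left(\frac{n}{d}\right) + d - 2.
\]
The statement then reduces to showing that every line meets the interior of at most $k+1$ of the regions, since in that case every line misses at least $n-(k+1)$ of them.

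For this counting step I would fix a line $\ell$ not contained in any $H_i$. Since each region is closed and convex, its interior meets $\ell$ in an open interval, and these intervals are pairwise disjoint because the interiors are. Parametrizing $\ell$ by $\R$, I would observe that the right endpoint of each such interval, when finite, is a point at which $\ell$ leaves a region interior and is therefore a boundary point, hence a point of $\ell \cap \bigcup_i H_i$. Because $\ell$ lies in no $H_i$, this intersection consists of at most $k$ points, so $\ell$ meets at most $k+1$ region interiors.

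To cover the lines contained in some hyperplane, I would use a short limiting argument: if $\ell$ meets the interiors of $m$ regions, then every line sufficiently close to $\ell$ also meets all $m$ of these open interiors, and since generic nearby lines meet at most $k+1$ interiors, so does $\ell$. Finally I would substitute the bound on $k$ and simplify, using $n - n/d = (d-1)n/d$ and the hypothesis that $d$ is fixed to absorb $(d-1)\log_2(n/d) + (d-1)$ into $O(\log n)$, yielding the claimed lower bound of $(d-1)n/d - O(\log n)$ on the number of missed regions. I expect no serious obstacle in this corollary; the only points requiring care are the limiting argument for non-generic lines and verifying that the error term is genuinely $O(\log n)$ once $d$ is treated as a constant.
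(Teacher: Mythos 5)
Your proposal is correct and follows exactly the paper's intended argument: the paper derives this corollary from Theorem \ref{thm:smallestkforn} together with the observation (stated at the start of Section \ref{sect:linetransversal}) that a partition of complexity $k$ has every line missing at least $n-(k+1)$ region interiors. Your write-up actually supplies more detail than the paper does for the counting step (the disjoint-intervals argument and the perturbation for lines lying inside one of the hyperplanes), and both details are sound.
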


Using a generalization of the ham sandwich theorem for ``well-separated'' families proved by B\'ar\'any, Hubard, and Jer\'onimo \cite{barany2008slicing}, we can improve this bound even further in the cases of $d=2$ and $d=3$ 

\[
    \left \lceil \frac{n-1}{d} \right \rceil \leq k \leq \frac{n}{d} + O(1)  .
\]

This sharper bound mainly solves these two cases as the $O(1)$ term is bounded by $1/2$ and $13/3$ for $d=2$ and $d=3$, respectively. We present multiple constructions that exhibit each of these bounds, each with different geometric properties.

Finally, we study the connection of Yao--Yao partitions with yet another generalization of the ham sandwich theorem. The following result, proved independently by Dolnikov \cite{dol1992generalization} and \v{Z}ivaljevi\'c and Vre\'cica \cite{zivaljevic1990extension}, is known as the central transversal theorem.

\begin{theorem}[Central transversal theorem]\label{thm:ctt}
Let $k, d$ be non-negative integers such that $k \leq d - 1$. For any set of k+1 absolutely continuous probability measures $\mu_{1}, \ldots, \mu_{k+1}$ there exists a $k$-dimensional affine space $V$ such that for any closed half-space $H$ with $V \subset H$ we have $\mu_{i}(H) \geq \frac{1}{d-k+1}$ for $i=1,\ldots, k+1$.
\end{theorem}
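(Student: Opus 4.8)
The plan is to follow the standard configuration-space/test-map scheme and reduce the statement to the Borsuk--Ulam type theorem for Stiefel manifolds announced in the introduction. The starting observation is a reduction in codimension. Fix a $k$-dimensional linear subspace $L \subseteq \R^d$ and let $\pi_L \colon \R^d \to L^\perp \cong \R^{d-k}$ be the orthogonal projection. A half-space $H$ contains the flat $V = p + L$ precisely when its bounding hyperplane contains $V$, and such half-spaces are in bijection with the half-spaces of $\R^{d-k}$ through the point $\pi_L(p)$. Hence the conclusion for $V$ is equivalent to asking that $\pi_L(p)$ be a point of Tukey depth at least $\frac{1}{d-k+1}$ for each of the projected measures $(\pi_L)_*\mu_1, \ldots, (\pi_L)_*\mu_{k+1}$ on $\R^{d-k}$. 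The constant is exactly the centerpoint bound in dimension $d-k$, so for a single measure such a point always exists; the content of the theorem is that, by also choosing the direction $L$, one can force a single point $p$ to serve all $k+1$ measures at once. This reframing recovers the two familiar endpoints: $k = d-1$ is the ham sandwich theorem and $k=0$ is the centerpoint theorem.

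Next I would set up the topology. Compactifying $\R^d$ by viewing it as the affine chart $\{x_{d+1}=1\}\subset \R^{d+1}$, an oriented $k$-flat of $\R^d$ together with a flat ``at infinity'' corresponds to an oriented $(k+1)$-dimensional linear subspace of $\R^{d+1}$, and an oriented hyperplane containing it corresponds to a unit normal in the orthogonal complement. I would therefore take as configuration space a Stiefel manifold of orthonormal frames in $\R^{d+1}$ adapted to this flag data, carrying the natural free action of a $2$-group $G$ (a product of copies of $\zz/2$ realized as sign changes of the frame vectors, encoding the orientation choices). The absolute continuity of the measures guarantees that every hyperplane is null, so the assignment sending a configuration to the tuple of deviations $\mu_i(H) - \frac{1}{d-k+1}$ over the admissible half-spaces depends continuously on the frame and is $G$-equivariant. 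Assembling these deviations into a single map $f\colon X \to U$ into a real $G$-representation $U$, arranged so that $f$ is equivariant and so that a zero of $f$ encodes exactly a common centerpoint of the required depth, turns the theorem into the statement that $f$ must vanish.

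Finally I would invoke the Borsuk--Ulam type theorem for Stiefel manifolds. Because the relevant Stiefel manifold is highly connected relative to $\dim U$ --- with the connectivity and $\dim U$ balancing in the same way they do in the two endpoint cases --- no $G$-equivariant map $X \to U \setminus \{0\} \simeq S(U)$ can exist, so $f^{-1}(0) \neq \emptyset$, which yields the desired transversal flat $V$.

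The main obstacle, and where the argument must be executed with care, is the construction of $f$ rather than its vanishing. Three points need attention: (i) the encoding must be genuinely $G$-equivariant and designed so that a zero forces the Tukey-depth inequality for every admissible half-space simultaneously, which is where the constant $\frac{1}{d-k+1}$ has to emerge, via the centerpoint/Radon bound in the projected space; (ii) the compactification introduces configurations corresponding to flats escaping to infinity, and one must verify that these do not create spurious zeros or, alternatively, build them harmlessly into the equivariant structure; and (iii) the numerical bookkeeping --- the connectivity of the Stiefel manifold against $\dim U$ --- must be checked so that our Stiefel Borsuk--Ulam theorem applies. This last cohomological matching is the technical heart, and it is precisely the step that the simple Borsuk--Ulam theorem for Stiefel manifolds is meant to streamline.
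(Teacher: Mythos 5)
Your high-level skeleton --- reduce the statement to finding a direction subspace on which the orthogonal projections of the $k+1$ measures admit a common centerpoint, then force this coincidence with a Borsuk--Ulam theorem for Stiefel manifolds --- matches the paper's approach. But the step you yourself flag as ``the main obstacle,'' the construction of the test map, is exactly where your plan fails, and the paper's resolution is different from what you sketch. You propose to assemble the deviations $\mu_i(H) - \frac{1}{d-k+1}$ over admissible half-spaces into an equivariant map into a finite-dimensional representation. This cannot work as stated: for each configuration there is a sphere's worth of half-spaces containing the candidate flat, the centerpoint property is an infinite family of \emph{inequalities}, and a zero of any finite tuple of such deviations neither implies nor is implied by the conclusion. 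The paper avoids half-space deviations entirely: for a frame $(v_1,\ldots,v_k) \in V_k(\rr^d)$ (frames in $\rr^d$ itself --- no compactification to $\rr^{d+1}$; the translation data comes from the measures, so your issue (ii) about flats escaping to infinity never arises) it projects each $\mu_i$ onto $\operatorname{span}\{v_1,\ldots,v_k\}$, chooses a \emph{canonical} centerpoint $p_i$ of each projected measure --- the barycenter of the compact convex set of all centerpoints, which makes the choice continuous and invariant under sign flips of the frame vectors --- and takes the test map with coordinates $\langle v_j, p_i - p_1\rangle$. A zero forces all the $p_i$ to coincide, and the depth bound $\frac{1}{d-k+1}$ is then automatic from the centerpoint theorem applied in the projected space; it never has to ``emerge'' from the test map at all.

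Second, your justification for the vanishing is wrong even granting a correct test map. You argue that no equivariant map $X \to U \setminus \{0\}$ exists ``because the relevant Stiefel manifold is highly connected relative to $\dim U$.'' It is not: $V_k(\rr^d)$ is only $(d-k-1)$-connected, while the target sphere has dimension $(d-1)+(d-2)+\cdots+(d-k)-1$, so for $k \ge 2$ any Dold-type connectivity argument fails badly --- the two quantities balance only at the endpoint $k=1$, the classical Borsuk--Ulam case, and at the other endpoint $V_{d-1}(\rr^d) \cong SO(d)$ is not even simply connected. This is precisely why the paper proves Theorem \ref{thm:stiefel-BU} by a different route: an explicit equivariant map $g$ with a single $(\zz_2)^k$-orbit of zeros, a linear homotopy from $g$ to $f$, Thom transversality, and a mod-$2$ count of orbits of components of the one-dimensional zero set. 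If you intend merely to cite Theorem \ref{thm:stiefel-BU} as a black box, your conclusion stands, but the ``numerical bookkeeping'' you defer to would not check out the way you expect, and the proposal as written leaves both the test map and the topological input unestablished.
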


The known proofs of this result involve computing non-trivial topological invariants of some associated spaces, such as Stiefel--Whitney characteristic classes or the Fadell--Husseini index. We first show a new proof that uses a simple homotopy argument. This proof method also allows us to prove some new results, such as the following theorem.

\begin{theorem}\label{thm:two-hyperplanes}
	Let $\mu_1, \ldots, \mu_{d}$ be $d$ probability measures in $\rr^d$.  Then, we can find two hyperplanes $H_1, H_2$ such that
	\begin{itemize}
		\item $H_1 \cup H_2$ splits $\mu_1$ into four equal parts,
		\item each half-space $H$ that contains $H_1 \cap H_2$ satisfies $\mu_i(H) \ge \frac{1}{3}$ for $i=2,3,\ldots, d-1$, and
		\item $H_1$ splits $\mu_d$ into two equal parts.
	\end{itemize}
\end{theorem}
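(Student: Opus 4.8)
The plan is to reduce all three conditions to planar statements relative to a variable $2$-plane, and then to locate the correct plane by running the same equivariant homotopy argument that proves Theorem~\ref{thm:ctt}, but with an enriched test map. First I would fix the target flat $V = H_1 \cap H_2$ as a $(d-2)$-flat and record that every half-space $H \supseteq V$ is the preimage, under the orthogonal projection $\pi_P \colon \R^d \to P$ onto the $2$-plane $P = V^\perp$, of a closed half-plane of $P$ whose boundary line passes through $c := \pi_P(V)$. Consequently the three requirements become conditions on the pushforwards $\pi_{P*}\mu_i$ in the plane $P$: the second condition says exactly that $c$ is a centerpoint of each $\pi_{P*}\mu_i$ with centrality $\tfrac13$ for $i = 2,\dots,d-1$ (note $\tfrac{1}{d-(d-2)+1}=\tfrac13$ is precisely the planar centerpoint constant, so this is the central-transversal condition with $k=d-2$); the first condition says the two lines $\pi_P(H_1),\pi_P(H_2)$ through $c$ quadrisect $\pi_{P*}\mu_1$; and the third says $\pi_P(H_1)$ bisects $\pi_{P*}\mu_d$. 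The crucial bookkeeping is that the centerpoint condition is imposed on only $d-2$ measures, one fewer than the tight central-transversal case of $d-1$ measures, and this single measure of slack is what pays for the extra quadrisection and bisection constraints.

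Next I would build the equivariant test map. The natural parameter space records the flat $V$ together with the two hyperplanes through it, which I would encode by the plane-direction $P$ (a point of the Stiefel manifold $V_2(\R^{d})$ of orthonormal $2$-frames spanning $V^\perp$), the basepoint $c \in P$, and the two directions in the pencil of hyperplanes containing $V$. The symmetry group is the order-$8$ group $(\zz/2)^2 \rtimes \zz/2$ generated by the two side-reversals of $H_1,H_2$ and the swap $H_1 \leftrightarrow H_2$; under this group the four quadrant-masses of $\mu_1$ get permuted exactly as the cells of a quadrisection, the $\mu_d$-bisection discrepancy changes sign, and the central-transversal obstruction for $\mu_2,\dots,\mu_{d-1}$ transforms as in the proof of Theorem~\ref{thm:ctt}. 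I would cut down the offsets by continuous selection (for instance choosing $H_1$ to bisect $\mu_d$ in each admissible direction, which is well defined by absolute continuity up to a routine limiting argument) and then assemble a single equivariant map from the configuration space to a representation sphere whose coordinates are the three quadrisection discrepancies of $\mu_1$, the bisection discrepancy of $\mu_d$, and the central-transversal test functions for $\mu_2,\dots,\mu_{d-1}$. Invoking the paper's Borsuk--Ulam-type theorem for Stiefel manifolds then forces this map to vanish, and a common zero yields a frame for which all planar conditions hold simultaneously; translating back through $\pi_P$ produces the required $H_1,H_2$.

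The main obstacle, and the step I expect to absorb most of the work, is verifying that the enriched map is genuinely obstructed, i.e.\ that it satisfies the hypotheses of the Borsuk--Ulam-type theorem for the correct group and representation so that the homotopy argument applies; this requires both the equivariance under the order-$8$ symmetry and a degree/essentialness computation on a suitable test subsphere, matching the dimension of the Stiefel manifold to that of the target. A secondary difficulty is reconciling the inequality-type centerpoint conditions with the equality-type quadrisection and bisection coordinates inside one map: here I would use the standard device that a failure of the $\tfrac13$ centerpoint property produces a clean planar cut of some $\pi_{P*}\mu_i$, and hence a definite sign of the corresponding test function, which the homotopy argument rules out. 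The remaining points---continuity and near-uniqueness of the bisector and centerpoint selections, and the behavior of the central projection on a hemisphere---are the usual technical matters handled by absolute continuity together with a perturbation-and-limit argument.
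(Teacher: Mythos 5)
Your reduction to planar conditions on a variable $2$-plane, and your observation that only $d-2$ measures carry the $1/3$-centrality constraint, are both correct and match the paper's starting point. The gap is in the engine of your argument. You enlarge the configuration space to carry the two lines as free parameters, which forces equivariance with respect to an order-$8$ group, and you then plan to ``invoke the paper's Borsuk--Ulam-type theorem.'' But Theorem \ref{thm:stiefel-BU} is stated and proved only for $(\zz_2)^k$ acting on $V_k(\rr^d)$ with target $\rr^{d-1}\times\cdots\times\rr^{d-k}$; it says nothing about a dihedral-type action on a space of the form $V_2(\rr^d)\times(\mbox{center})\times(\mbox{two directions})$ with a target containing quadrisection discrepancies. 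The obstruction statement you would need is exactly the step you defer, and it is not routine: for equipartitions of several measures by two hyperplanes the analogous equivariant maps are known to admit nonvanishing representatives in many regimes (this is the content of Ramos-type upper bounds \cite{Ramos:1996dm}), so the ``degree/essentialness computation'' could genuinely fail and cannot be waved through. There are also two concrete inconsistencies in the setup as described: (i) the theorem's conclusion is \emph{not} symmetric under swapping $H_1\leftrightarrow H_2$ (only $H_1$ must bisect $\mu_d$), so the swap cannot act compatibly on your list of test coordinates without adding a second bisection coordinate and over-constraining the problem; (ii) the dimensions do not match---your configuration space has dimension $2d+1$ (or $2d$ after your selection step) while your test coordinates number $2d$ (respectively $2d-1$), so even the formal shape of the desired Borsuk--Ulam statement is unresolved.

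The idea you are missing, which is how the paper avoids all of this, is to never parametrize the two hyperplanes at all. For each frame $(v_1,v_2)\in V_2(\rr^d)$, the planar Yao--Yao theorem gives a \emph{unique} quadrisection of $\Pi_2(\mu_1)$ by two lines, one of them parallel to $v_2$, and its center depends continuously on the frame and is invariant under the sign flips $(v_1,v_2)\mapsto(\pm v_1,\pm v_2)$. So one runs the proof of Theorem \ref{thm:stiefel-transversal} with $k=2$ verbatim, except that $p_1$ is taken to be this Yao--Yao center of $\Pi_2(\mu_1)$ (with respect to the ordered basis $(v_2,v_1)$) instead of a centerpoint, while $p_2,\dots,p_{d-1}$ remain centerpoints of the planar projections and $p_d$ remains the median of the projection of $\mu_d$ onto $\operatorname{span}\{v_1\}$. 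The test map is then the same $(\zz_2)^2$-equivariant map $V_2(\rr^d)\to\rr^{d-1}\times\rr^{d-2}$ as before, with $2d-3$ coordinates matching $\dim V_2(\rr^d)$, and Theorem \ref{thm:stiefel-BU} applies as stated. At a zero, the preimages under $\Pi_2$ of the two Yao--Yao lines are $H_1$ and $H_2$: the quadrisection of $\mu_1$ is automatic, the coincidences $p_1=p_2=\cdots=p_{d-1}$ give the $1/3$-centrality for $\mu_2,\dots,\mu_{d-1}$, and the vanishing of $\langle v_1,p_d-p_1\rangle$ says precisely that the hyperplane lying over the line parallel to $v_2$ bisects $\mu_d$. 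Note that this formulation also dissolves your worry about mixing inequality-type centerpoint conditions with equality-type coordinates: the centrality is enforced by coincidence of canonically chosen points, not by sign arguments.
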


If we project onto the orthogonal complement of $H_1 \cap H_2$, the first equipartition is a Yao--Yao partition. The subspace $H_1 \cap H_2$ is also a central transversal for the next $d-1$ measures as in Theorem \ref{thm:ctt}.

We describe similar results, such as Theorem \ref{thm:yao-transversal}, that interpolate between higher-dimensional Yao--Yao partitions and the central transversal theorem.

\subsection{Structure of the paper}

In Section \ref{sect:toolsnotation}, we begin by introducing notation that we use throughout the paper regarding Yao--Yao partitions and general properties of such partitions. Within this section, we also provide an overview of the proof technique used for the Yao--Yao partition theorem. We use these tools in Section \ref{sect:proofgeneralyao} to prove Theorem \ref{thm:generalyao}. In Section \ref{sect:linetransversal}, we prove our bounds for Problem \ref{problem:smallestkforn}. Section \ref{sect:stiefelyao} introduces results that connect Stiefel manifolds and Yao--Yao partitions. Lastly, we present remarks and open problems in Section \ref{sect:remarks}.

\section{Tools and notation}\label{sect:toolsnotation}

Let $\mu$ be a measure in $\rr^d$. We say that $\mu$ is \textit{absolutely continuous} if it is absolutely continuous with respect to the Lebesgue measure in $\rr^d$ and $\mu(A) \neq 0$ for every open $A$ subset of $\rr^d$. We say that $\mu$ is \textit{finite} if $\mu(\rr^d) < \infty$.

Before we construct the convex partitions of $\rr^d$ for our main results, we define Yao--Yao partitions and provide an overview of the original proof, which contains tools we use to prove Theorem \ref{thm:generalyao}.

A \textit{Yao--Yao partition} is a convex partition of a $\R^d$ into $2^{d}$ regions, defined by a recursive process. A key property of these partitions is that every hyperplane misses the interior of at least one region. Yao--Yao partitions are determined by the choice of an ordered orthonormal basis $(u_1, u_2, \ldots, u_d)$. Each Yao--Yao partition $P$ has a point, denoted $C(P)$, which we call the \textit{center of the partition}.  

\begin{definition}
Let $(u_1, u_2, \ldots, u_d)$ be an orthonormal basis of $\rr^d$. We say a hyperplane is \textit{horizontal} if it is orthogonal to $u_d$. For a horizontal hyperplane $H$, we define the open half-spaces induced by $H$ as $H_{+} = \{x + tu_{d} \mid x \in H, t > 0\}$ and $H_{-} = \{x + tu_{d} \mid x \in H, t < 0\}$. Similarly, for a measure $\mu$ we define $\mu_{+}$ as $\mu$ restricted to $H_{+}$ and $\mu_{-}$ as $\mu$ restricted to $H_{-}$. For a vector $v = (v_1, v_2, \ldots, v_d) \in S^{d-1}$ such that $v_d \neq 0$, the \textit{projection onto $H$ in the direction of $v$} is a mapping from $\rr^d$ to $H$ denoted by $p_{v}(\cdot)$. Denote the projection of $\mu_+$ and $\mu_-$ onto $H$ by $p_v(\mu_+)$ and $p_v(\mu_-)$, respectively. 
\end{definition}

Let $p$ be a point and $(u_1, u_2, \ldots, u_d)$ be an orthonormal basis of $\rr^d$. A Yao--Yao partition $P$ of $\rr^d$ induced by the orthonormal basis $(u_1, u_2, \ldots, u_d)$ with center $p$ is a partition obtained in the following way. For $d=1$, it is the partition of $\rr^1$ into two infinite rays in opposite directions starting from $p$. For $d>1$, we take $H$ to be the horizontal hyperplane through $p$.  Now, we construct two Yao--Yao partitions $P_+$ and $P_-$ of $H$ induced by the orthonormal basis $(u_1, \ldots, u_{d-1})$, each with center $p$ and a vector $v=(v_1, \ldots, v_d)$ such that $v_d \neq 0$. We take $P=\{p_v^{-1}({C})\cap H_+ : C \in P_+\}\cup \{p_v^{-1}({C})\cap H_- : C \in P_-\}$ as our final partition.

Given the center and the orthonormal basis, a Yao--Yao partition is determined by a binary tree of height $d-1$ of projection vectors. Yao and Yao proved the following theorem.

\begin{theorem}[Yao, Yao 1985]
    Let $d$ be a positive integer, $\mu$ be a finite absolutely continuous measure in $\rr^d$, and $(u_1, \ldots, u_d)$ be an orthonormal basis of $\rr^d$. There exists a unique Yao--Yao partition $P$ of $\rr^d$ induced by $(u_1, \ldots, u_d)$ such that
    \[
    \mu(C) = \frac{\mu(\rr^d)}{2^d} \qquad \mbox{ for every }C \in P.
    \]
\end{theorem}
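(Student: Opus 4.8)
The plan is to argue by induction on $d$, mirroring the recursive definition of a Yao--Yao partition. For the base case $d=1$, a partition is determined solely by its center $p$, and the two rays carry masses $\mu(\{x : \langle x,u_1\rangle < \langle p,u_1\rangle\})$ and $\mu(\{x : \langle x,u_1\rangle > \langle p,u_1\rangle\})$. Since $\mu$ is finite and absolutely continuous, the function $t \mapsto \mu(\{x : \langle x,u_1\rangle \le t\})$ is continuous and strictly increasing, sweeping out $(0,\mu(\rr))$; hence there is exactly one $p$ for which both rays have mass $\mu(\rr)/2$. This gives existence and uniqueness in dimension one.

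For the inductive step, assume the statement in dimension $d-1$ and fix $\mu$ on $\rr^d$ together with the basis $(u_1,\ldots,u_d)$. First I would locate the horizontal hyperplane $H$ through the center: pushing $\mu$ forward to the $u_d$-axis and invoking the one-dimensional case produces a unique height at which $\mu(H_+) = \mu(H_-) = \mu(\rr^d)/2$, fixing the $u_d$-coordinate of $C(P)$. It then remains to choose the projection direction $v$ (with $v_d \ne 0$) and the position of the center inside $H$. For each admissible $v$, the projected measures $p_v(\mu_+)$ and $p_v(\mu_-)$ are finite absolutely continuous measures on $H \cong \rr^{d-1}$, so by the inductive hypothesis each admits a unique equipartition Yao--Yao center with respect to $(u_1,\ldots,u_{d-1})$; call these $q_+(v)$ and $q_-(v)$. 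Because $P_+$ and $P_-$ must share the single center $p \in H$, the whole problem reduces to finding $v$ with $q_+(v) = q_-(v)$: once such a $v$ is found, slicing each side by the associated $(d-1)$-dimensional Yao--Yao partition yields $2^{d-1}$ cells of mass $\mu(\rr^d)/2^d$ on each side, as required.

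The main obstacle is exactly this matching step, solving $q_+(v) = q_-(v)$. I would analyze the map $\Phi(v) = q_+(v) - q_-(v) \in \rr^{d-1}$. Its continuity follows from continuous dependence of the equipartition center on the underlying measure, which can itself be extracted from the uniqueness just proved together with a compactness argument. The decisive structural feature is the coupling built into the projection: writing $v = (v', v_d)$ with $v_d > 0$, a point at height $t > 0$ in $H_+$ projects to a shift of $-(t/v_d)v'$, while a point at height $t < 0$ in $H_-$ shifts by $+(|t|/v_d)v'$, the opposite direction. Thus tilting $v'$ displaces the projections of $\mu_+$ and $\mu_-$ antipodally, so $q_+$ and $q_-$ move oppositely and $\Phi$ inherits a monotonicity in $v'$. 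Moreover, as $v$ approaches horizontal the two projected measures spread out in opposite directions, forcing $q_+(v)$ and $q_-(v)$ to diverge oppositely, so $\Phi(v)$ points consistently (antipodally) on a large sphere in the parameter space $v'$.

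A Poincaré--Miranda or topological-degree argument then guarantees a zero of $\Phi$, and the monotonicity makes that zero unique. Combined with the uniqueness of the bisecting hyperplane $H$ and of each center $q_\pm(v)$, this produces a single admissible $v$ and a single center $p$, hence the unique equipartition Yao--Yao partition $P$ of $\rr^d$ induced by $(u_1,\ldots,u_d)$. This closes the induction, so I expect the routine portions to be the continuity and diverging-boundary estimates, with the genuine difficulty concentrated in verifying the antipodal monotonicity of $\Phi$ precisely enough to apply the degree argument.
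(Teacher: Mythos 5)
Your proposal follows essentially the same route as the paper's outline of the Yao--Yao argument: induction on dimension via the halving horizontal hyperplane, reduction to matching the top and bottom projected centers $q_+(v)=q_-(v)$, existence by a degree argument using the antipodal behavior of $\Phi(v)=q_+(v)-q_-(v)$ as $v$ becomes horizontal (the paper's map $f$ with $f(v)=-v$ on the equator), and uniqueness from the fact that tilting $v$ moves the two centers in opposite directions. The step you flag as the genuine difficulty --- the strict antipodal monotonicity of the centers under shearing --- is exactly the geometric lemma the paper attributes to Yao and Yao, so your plan and the paper's sketch coincide in both structure and where the real work lies.
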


If the orthonormal basis is fixed, then the center depends only on the measure, so we denote it as $C(\mu)$. We call this partition the Yao--Yao equipartition of $\mu$ or the Yao--Yao partition of $\mu$ if there is no risk of confusion. Each cell of the partition is a cone with apex $C(\mu)$. In dimension $d=1$, the center is the point that bisects the measure. For dimensions $d \geq 2$, we bisect the measure with a horizontal hyperplane $H$ and find a projection vector $v$ such that $C(p_{v}(\mu_{+})) = C(p_{v}(\mu_{-}))$. Yao and Yao proved that there is a unique projection vector $v$ for which this happens. Moreover, as $\mu$ varies continuously, so does $C(\mu)$.

The reason why $C(p_{v}(\mu_{+}))$ and $C(p_{v}(\mu_{-}))$ coincide for some projection vector is that otherwise we can construct a map from $B^{d-1}$, the $(d-1)$-dimensional unit ball, thought of as the set of points in $S^{d-1}$ with non-negative last coordinate to $S^{d-2}$ as
\begin{align*}
    f:B^{d-1} & \to S^{d-2} \\
    v &\mapsto \begin{cases}
    \frac{C(p_{v}(\mu_{+})) - C(p_{v}(\mu_{-}))}{||C(p_{v}(\mu_{+})) - C(p_{v}(\mu_{-}))||} & \mbox{if } v_d >0, \\
    -v & \mbox{if } v_d = 0.
    \end{cases}
\end{align*}

Yao and Yao showed that this map is well defined and continuous if the top and bottom centers never coincide. This is a contradiction since it would be imply that $f|_{S^{d-2}}$ has degree zero instead of $(-1)^{d-1}$ (Yao and Yao instead finish the proof using the Borsuk--Ulam theorem).

Additional geometric arguments show that there is only one projection vector up to scalar multiples that makes the top and bottom center coincide. To be precise, if the last coordinate of $v$ is positive and we choose a new projection vector $v' = v + h$ where $h$ is parallel to $H$ and not zero, then
\begin{align*}
    \langle C(p_{v'}(\mu_{+})), h \rangle < \langle C(p_{v}(\mu_{+})), h \rangle  \\
     \langle C(p_{v'}(\mu_{-})), h \rangle > \langle C(p_{v}(\mu_{-})), h \rangle
\end{align*}
where $\langle \cdot, \cdot \rangle$ denotes the standard dot product.  In other words, as the projection direction changes, the centers move in opposite directions.

The first family of partitions we consider are based on Yao--Yao partitions.

\begin{definition}
Let $\mu$ be an absolutely continuous finite measure in $\rr^d$ and $(u_1, \ldots, u_d)$ be an orthonormal basis of $\rr^d$. For $\alpha, \beta > 0$ such that $\alpha + \beta = 1$ we define recursively an $(\alpha,\beta)$-partition of $\mu$ with $(\alpha, \beta)$-center in the following way. In dimension $d=1$, the $(\alpha, \beta)$-center $C(\mu)$ is the point such that the regions in the directions of $-u_1$ and $u_1$ have measure $\alpha(\mu(\R))$ and $\beta(\mu(\R))$, respectively. For dimensions $d \geq 2$, we first halve the measure with a horizontal hyperplane $H$ and find a projection vector $v$ such that $C(p_{v}(\mu_{+})) = C(p_{v}(\mu_{-}))$. In the previous equality, we are considering $C(p_{v}(\mu_{+}))$ the center of the $(\alpha, \beta)$-partition $P_+$ of $p_v(\mu_{+})$ with respect to the basis $(u_1, \ldots, u_{d-1})$ and $C(p_{v}(\mu_{-}))$ the center of the $(\alpha, \beta)$-partition $P_-$ of $p_v(\mu_{-})$ with respect to the basis $(u_1, \ldots, u_{d-1})$. Finally, we define the $(\alpha, \beta)$-partition as $P=\{p_v^{-1}({C})\cap H_+ : C \in P_+\}\cup \{p_v^{-1}({C})\cap H_- : C \in P_-\}$.
\end{definition}

Note that in the case $\alpha = \beta = \frac{1}{2}$, we have a Yao--Yao equipartition. The proof of existence and uniqueness of Yao--Yao partitions can be used verbatim to prove the existence and uniqueness of $(\alpha,\beta)$-partitions. Each $(\alpha, \beta)$-partition of a measure splits $\rr^d$ into $2^d$ convex parts. Given an $(\alpha, \beta)$-partition, every hyperplane misses the interior of at least one of the $2^d$ sections.

The only change we make, relative to Yao and Yao's original construction, is in the first step of the construction. Each subsequent step uses a halving horizontal hyperplane. More general versions of these partitions, where the horizontal hyperplanes are not necessarily halving hyperplanes, were studied by Lehec \cite{lehec2009yao}. As we will show, $(\alpha, \beta)$-partitions have additional useful structural properties.

For a finite absolutely continuous measure $\mu$ in $\rr^d$ and fixed $(\alpha, \beta)$, we introduce a lemma that describes the relationship between (classic) Yao--Yao equipartitions and $(\alpha, \beta)$-partitions of $\mu$.

\begin{lemma}\label{lemma:structure1}
	Let $\alpha, \beta$ be positive real numbers whose sum is $1$ and let $(u_1, \ldots, u_d)$ be an orthonormal basis of $\rr^d$. Given a finite absolutely continuous measure $\mu$ in $\rr^d$, consider the $(\alpha, \beta)$-partition of $\mu$ induced by the basis $u_1, \ldots, u_d$. Then, we can split the regions of the partition into $2^{d-1}$ pairs $(A_i, B_i)$ so that
	\begin{itemize}
		\item $A_i \cup B_i$ is convex for each $i$,
		\item each region $A_i$ contains an infinite ray in the direction of $-u_1$, and
		\item each region $B_i$ contains an infinite ray in the direction of $u_1$.
	\end{itemize}
Additionally, for each $i=1,\ldots,2^{d-1}$ the regions satisfy $\displaystyle \mu(A_i) = \frac{\alpha}{2^{d-1}} \mu(\rr^d)$ and $\displaystyle \mu(B_i) = \frac{\beta}{2^{d-1}} \mu(\rr^d)$.
\end{lemma}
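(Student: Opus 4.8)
The plan is to induct on the dimension $d$, exploiting the recursive definition of the $(\alpha,\beta)$-partition. In the base case $d=1$ there is a single pair ($2^{d-1}=1$): let $A_1$ be the ray from the $(\alpha,\beta)$-center in direction $-u_1$ and $B_1$ the ray in direction $u_1$. Their union is all of $\rr$, hence convex, the ray conditions are immediate, and by definition of the one-dimensional $(\alpha,\beta)$-center we have $\mu(A_1)=\alpha\mu(\rr)$ and $\mu(B_1)=\beta\mu(\rr)$, as required.

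For the inductive step, let $H$, $v$, $P_+$, $P_-$ be as in the definition of the $(\alpha,\beta)$-partition of $\mu$, so that $P_+$ is the $(\alpha,\beta)$-partition of the $(d-1)$-dimensional measure $p_v(\mu_+)$ with respect to $(u_1,\ldots,u_{d-1})$, and likewise $P_-$ for $p_v(\mu_-)$. The pushforward $p_v(\mu_+)$ is again finite and absolutely continuous in $H\cong\rr^{d-1}$, so the inductive hypothesis furnishes a pairing of $P_+$ into $2^{d-2}$ pairs $(A_i^+,B_i^+)$ with the three listed properties inside $H$, and similarly a pairing $(A_j^-,B_j^-)$ of $P_-$. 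I would then lift these to the full partition by setting, for the top cells, $A_i := p_v^{-1}(A_i^+)\cap H_+$ and $B_i := p_v^{-1}(B_i^+)\cap H_+$, and analogously $A_{2^{d-2}+j}, B_{2^{d-2}+j}$ from $P_-$ using $H_-$. This yields exactly $2^{d-1}$ pairs that use each of the $2^d$ cells of $P$ once.

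It then remains to verify the three properties. The measure identity is the most direct: since $\mu_+$ is $\mu$ restricted to $H_+$ and $p_v$ pushes it forward to $p_v(\mu_+)$ with $p_v(\mu_+)(H)=\mu_+(\rr^d)=\tfrac12\mu(\rr^d)$, one gets $\mu(A_i)=p_v(\mu_+)(A_i^+)=\tfrac{\alpha}{2^{d-2}}\cdot\tfrac12\mu(\rr^d)=\tfrac{\alpha}{2^{d-1}}\mu(\rr^d)$, and similarly for $B_i$ and for the bottom pairs. For convexity, the key observation is that $A_i\cup B_i = p_v^{-1}(A_i^+\cup B_i^+)\cap H_+$; as $p_v$ is affine, the preimage of the convex set $A_i^+\cup B_i^+$ is convex, and intersecting with the half-space $H_+$ preserves convexity. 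For the ray conditions I would use that $u_1$ is parallel to $H$, so $p_v$ commutes with translation by multiples of $u_1$: lifting a point of $A_i^+$ slightly into $H_+$ along $v$ and then translating in direction $-u_1$ stays inside $p_v^{-1}(A_i^+)\cap H_+$, producing the required ray (symmetrically for $B_i$ with $+u_1$).

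The main point to get right is the choice of pairing. A tempting alternative is to pair cells across $H$, but that generally fails convexity; the correct move is the ``within-halves'' lifting described above (and when $\alpha\neq\beta$ the measure values force each pair to lie entirely in $H_+$ or entirely in $H_-$). The content of the lemma is precisely that this lifting simultaneously preserves convexity of the unions, via the preimage identity, and the two opposite ray directions, via the compatibility of $p_v$ with $\pm u_1$. The one routine but necessary technical check is that $p_v(\mu_+)$ remains finite and absolutely continuous with positive mass on open subsets of $H$, so that the inductive hypothesis genuinely applies.
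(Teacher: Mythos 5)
Your proof is correct and follows essentially the same route as the paper: induction on $d$, applying the inductive hypothesis to the projected partitions $P_+$ and $P_-$ of $p_v(\mu_\pm)$ and lifting the pairings via $p_v^{-1}(\cdot)\cap H_\pm$, then checking convexity, the $\pm u_1$ rays, and the measures. Your write-up is in fact slightly more careful than the paper's (the preimage identity for convexity, the compatibility of $p_v$ with $u_1$-translations, and the measure bookkeeping with the correct exponent $2^{d-2}$ at the inductive level), but the underlying argument is identical.
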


\begin{proof}
We proceed by induction on the dimension $d$. Clearly, the claim is true for $d=1$. Now we assume that the statement is true for $d-1$ and want to show that it is true for $d$. Project $\mu$ onto the horizontal halving hyperplane $H$ in the direction of the associated projection vector $v$ of the $(\alpha, \beta)$-partition of $\mu$. The projected partitions of $p_{v}(\mu_{+})$ and $p_{v}(\mu_{-})$ are both $(\alpha, \beta)$-partitions. Therefore, we can find pairings of regions in $H_{+}$ and $H_{-}$ that satisfy our requirements. Taking the pre-image of these partitions does not change convexity and the same rays in the directions $\pm u_1$ are contained. Therefore, these pairings work for the original $(\alpha, \beta)$-partition. Since $\mu_{+}$ and $\mu_{-}$ each have measure $\displaystyle \frac{1}{2} \mu(\R^{d})$, we know $\displaystyle \mu(A_i) = \frac{\alpha}{2^{d-1}} \cdot \frac{1}{2} \mu(\R^d)$ and $\displaystyle \mu(B_i) = \frac{\beta}{2^{d-1}} \cdot \frac{1}{2} \mu(\R^d)$ for each $i$. Thus, the claim is true for all $d$.
\end{proof}

Throughout the rest of the paper we will use $A_i$, $B_i$ to denote the regions as described above. For each $i$, let $C_i = A_i \cup B_i$. We call $C_1, \ldots, C_{2^{d-1}}$ the \textit{frame} of the partition.  Notice that the frame of the partition is a convex equipartition of $\mu$ into $2^{d-1}$ parts. We now show that the frame of the partition does not depend on the values of $\alpha, \beta$, as it is defined by the projection of $\mu$ onto the orthogonal complement of $u_1$.

\begin{lemma}\label{lemma:structure2}
	Let $\alpha, \beta$ be positive real numbers whose sum is $1$ and let $u_1, \ldots, u_d$ be an orthonormal basis of $\rr^d$ for $d \geq 2$.  Given a measure $\mu$ in $\rr^d$, let $A_1, \ldots, A_{2^{d-1}}, \\ B_1, \ldots, B_{2^{d-1}}$ be the $(\alpha, \beta)$-partition of $\mu$ induced by the basis $u_1, \ldots, u_d$. Let $V$ be the hyperplane orthogonal to $u_1$. Let $\pi: \rr^d \to V$ be the orthogonal projection onto $V$. We denote for each $1 \le i \le 2^{d-1}$ the sets $C_i = A_i \cup B_i$ and $D_i = \pi (C_i)$. Then, $D_1, \ldots, D_{2^{d-1}}$ is the Yao--Yao equipartition of $\pi (\mu)$ on $V$ induced by the basis $u_2, \ldots, u_{d}$.
\end{lemma}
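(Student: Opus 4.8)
The plan is to induct on the dimension $d$. The base case $d=2$ is direct: there the bisecting line $H$ and the projection line through the center cut $\rr^2$ into four cones, and the pairing produced by Lemma~\ref{lemma:structure1} forces $C_1=H_+$ and $C_2=H_-$, the two half-planes bounded by $H$. Since $H$ bisects $\mu$, projecting out $u_1$ sends these to the two rays of $V$ meeting at the height of $H$, which is exactly the one-dimensional Yao--Yao (bisecting) partition of $\pi(\mu)$.

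Before the inductive step I would record a structural fact valid in every dimension. Each cell $A_i,B_i$ is a cone with apex the center $C(\mu)$, so by Lemma~\ref{lemma:structure1} each $C_i=A_i\cup B_i$ is a \emph{convex} cone with apex $C(\mu)$ containing both a $+u_1$ ray and a $-u_1$ ray. A convex cone containing the opposite directions $\pm u_1$ contains the whole line through its apex in direction $u_1$, hence is invariant under translation by $u_1$; therefore $C_i=\pi^{-1}(D_i)$ with $D_i=\pi(C_i)$ convex. Consequently $\{D_i\}$ is a convex partition of $V$ into $2^{d-1}$ pieces, and $\pi(\mu)(D_i)=\mu(\pi^{-1}(D_i))=\mu(C_i)=\tfrac{1}{2^{d-1}}\mu(\rr^d)$, so it is an equipartition of $\pi(\mu)$. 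By uniqueness of the Yao--Yao equipartition induced by a fixed basis, it then suffices to show that $\{D_i\}$ is produced by the Yao--Yao recursion on $V$ with basis $u_2,\dots,u_d$.

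The heart of the argument is a commutation of projections. Set $v'=\pi(v)$, which still has nonzero $u_d$-component, and write $H'=\pi(H)=H\cap V$ and $\pi'$ for the orthogonal projection of $H$ onto $H'$ (projecting out $u_1$). I claim the two maps $\rr^d\to H'$ given by $\pi'\circ p_v$ and $p_{v'}\circ\pi$ coincide: both are the affine projection onto the common target $H'$ with kernel direction $\mathrm{span}(u_1,v)$, where one uses $v'=\pi(v)$ and $\langle u_1,u_d\rangle=0$. Pushing forward $\mu_\pm$ gives $p_{v'}(\pi(\mu_\pm))=\pi'(p_v(\mu_\pm))$ on $H'$, and $H'$ bisects $\pi(\mu)$ since $\pi$ preserves the $u_d$-coordinate. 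Now apply the induction hypothesis to the $(\alpha,\beta)$-partitions $P_+,P_-$ of $p_v(\mu_\pm)$ on $H$: their frames project under $\pi'$ to the Yao--Yao equipartitions of $\pi'(p_v(\mu_\pm))=p_{v'}(\pi(\mu_\pm))$, and because $P_+$ and $P_-$ share their center $q$, these two Yao--Yao partitions share the center $\pi'(q)$. Thus for $\pi(\mu)$ the candidate vector $v'$ makes the top and bottom centers coincide, so by uniqueness of the projection vector it is precisely the one chosen by the Yao--Yao recursion on $V$. Finally, using the $u_1$-invariance together with the commutation identity applied pointwise, each frame cell (which by Lemma~\ref{lemma:structure1} is a lift of a frame cell $C^\pm_j$ of $P_\pm$, staying inside $H_\pm$) satisfies $\pi\big(p_v^{-1}(C^\pm_j)\cap H_\pm\big)=p_{v'}^{-1}\big(\pi'(C^\pm_j)\big)\cap H'_\pm$, which is exactly a cell of the lifted Yao--Yao partition of $\pi(\mu)$. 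Hence $\{D_i\}$ is that equipartition, closing the induction.

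I expect the main obstacle to be the commutation identity and its bookkeeping: selecting $v'=\pi(v)$, verifying that $\pi'\circ p_v$ and $p_{v'}\circ\pi$ agree as affine maps (not merely that their kernels coincide), and checking that the centers $\pi'(q)$ align so that uniqueness of the Yao--Yao projection vector can be invoked. Correctly indexing which frame cells are lifts of $P_+$ versus $P_-$, so that the cell-by-cell matching is unambiguous, is the other place where care is needed.
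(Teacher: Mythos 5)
Your proposal is correct and follows essentially the same route as the paper's proof: induction on $d$ with the commutation identity $\pi \circ p_v = p_{\pi(v)} \circ \pi$ as the key step, applying the inductive hypothesis to the partitions of $p_v(\mu_\pm)$ and concluding via the equipartition property and uniqueness. You simply fill in details the paper leaves implicit (the $u_1$-invariance of the frame cells, the coincidence of the projected centers validating $\pi(v)$ as the projection vector, and the cell-by-cell matching).
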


\begin{proof}
We proceed by induction on $d$. Clearly, the claim is true for $d=2$. Now we assume that the statement is true for $d-1$ and want to show that it is true for $d$. Let $R \subset V$ be the affine subspace of dimension $d-2$ that halves $\pi (\mu)$, notice that $R = \pi(H)$. We prove this lemma by using the equivalence of two projections onto $R$.

Let $v$ be the associated projection vector of the $(\alpha, \beta)$-partition. By the inductive hypothesis, we know that the frame of the $(\alpha,\beta)$-partition of $p_v(\mu_+)$ projects onto the Yao--Yao partition of $\pi(p_v(\mu_+))$.  Similarly, the frame of the $(\alpha,\beta)$-partition of $p_v(\mu_-)$ projects onto the Yao--Yao partition of $\pi(p_v(\mu_-))$.  Therefore, the projection of the frame of the $(\alpha, \beta)$-partition of $\mu$ projects onto a Yao--Yao partition of $\pi(\mu)$ whose projection vector is $\pi(v)$.  This is simply because $\pi \circ p_v = p_{\pi(v)} \circ \pi $.  The resulting partition of $\pi(\mu)$ is clearly an equipartition, so we obtain the desired result.
\end{proof}

Now we show that the frame of a partition also remains constant if we restrict the measure to the $A_i$'s or to the $B_i$'s, which will be useful for our definition of multicenter partitions.

\begin{corollary}\label{cor:restriction}
	With the notation of Lemma \ref{lemma:structure2} define $\displaystyle A = \bigcup_{i=1}^{2^{d-1}} A_i$ and $ \mu|_{A}$ the restriction of $\mu$ to $A$.  Then, $D_1, \ldots, D_{2^{d-1}}$ is the Yao--Yao equipartition of $\pi(\mu|_{A})$ on $V$ induced by the basis $u_2, \ldots, u_{d}$.  The lemma also holds if we replace $A$ by $\displaystyle B = \bigcup_{i=1}^{2^{d-1}} B_i$.
\end{corollary}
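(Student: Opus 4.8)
The plan is to piggyback on Lemma~\ref{lemma:structure2}, which already identifies $D_1, \ldots, D_{2^{d-1}}$ as a genuine Yao--Yao partition induced by $u_2, \ldots, u_d$ (equivalently, as a geometric object carrying a center and a binary tree of projection vectors). Once we know the sets $D_i$ have the correct combinatorial shape, the only thing left to verify is that they equipartition the new measure $\pi(\mu|_A)$; the result then follows immediately from the uniqueness of Yao--Yao equipartitions applied to $\pi(\mu|_A)$, since a Yao--Yao partition that happens to equalize the measure must be \emph{the} Yao--Yao equipartition.

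First I would record the elementary containment $A_i \subseteq C_i \subseteq \pi^{-1}(D_i)$, which holds because $\pi(A_i) \subseteq \pi(C_i) = D_i$. Together with the fact that the $D_i$ tile $V$ with pairwise disjoint interiors, this lets me evaluate the pushforward on each cell:
\[
\pi(\mu|_A)(D_i) = \mu\bigl(A \cap \pi^{-1}(D_i)\bigr) = \sum_{j} \mu\bigl(A_j \cap \pi^{-1}(D_i)\bigr).
\]
The diagonal term $j=i$ equals $\mu(A_i)$ because $A_i \subseteq \pi^{-1}(D_i)$, while for $j \neq i$ the set $A_j \cap \pi^{-1}(D_i)$ sits inside $\pi^{-1}(D_i \cap D_j) \subseteq \pi^{-1}(\partial D_i)$. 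Since $\partial D_i$ is Lebesgue-null in $V$, its cylinder $\pi^{-1}(\partial D_i)$ is Lebesgue-null in $\rr^d$, hence $\mu$-null by absolute continuity, so the off-diagonal terms vanish. Invoking the values computed in Lemma~\ref{lemma:structure1}, I obtain $\pi(\mu|_A)(D_i) = \mu(A_i) = \frac{\alpha}{2^{d-1}}\mu(\rr^d)$, which is independent of $i$; thus $D_1, \ldots, D_{2^{d-1}}$ equipartitions $\pi(\mu|_A)$.

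To close the argument I would check that $\pi(\mu|_A)$ lies in the class to which the uniqueness theorem applies, namely that it is finite and absolutely continuous. Finiteness and absolute continuity with respect to Lebesgue measure are inherited from $\mu$ through restriction and projection (Fubini produces the projected density), and positivity on every open subset of $V$ follows from the fact that, since $\alpha>0$, each cone $A_i$ has nonempty fibres over the interior of $D_i$, so that $\pi(A_i) = D_i$. The case of $B = \bigcup B_i$ is verbatim the same, with $\mu(B_i) = \frac{\beta}{2^{d-1}}\mu(\rr^d)$ supplying the (still constant) common value.

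I expect the main obstacle to be purely measure-theoretic bookkeeping rather than geometric: confirming that the off-diagonal contributions really are null and that $\pi(\mu|_A)$ meets the hypotheses of the uniqueness theorem. The geometric heart---that restricting $\mu$ to the ``$-u_1$ side'' cones $A_i$ leaves the frame undisturbed precisely because each $A_i$ already projects into its own cell $D_i$---is immediate once Lemma~\ref{lemma:structure2} is in hand.
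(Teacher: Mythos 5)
Your proof is correct and follows essentially the same route as the paper: invoke Lemma \ref{lemma:structure2} to see that $D_1,\ldots,D_{2^{d-1}}$ is a Yao--Yao partition, use Lemma \ref{lemma:structure1} to show it equipartitions $\pi(\mu|_A)$, and conclude by uniqueness of Yao--Yao equipartitions. The only difference is that you spell out the measure-theoretic bookkeeping (the null off-diagonal terms and the verification that $\pi(\mu|_A)$ is finite and absolutely continuous) that the paper's proof states as the one-line identity $\mu(A_i)=\mu|_A(C_i)=\pi(\mu|_A)(D_i)$ without further justification.
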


\begin{proof}
By Lemma \ref{lemma:structure2}, we know that $D_1, \ldots, D_{2^{d-1}}$ is a Yao--Yao partition, and Lemma \ref{lemma:structure1} shows that $\mu(A_i) = \mu|_{A}(C_i)=\pi(\mu|_A)(D_i)$ has the same value for each $i$.  As $\pi(\mu|_A)$ is a finite absolutely continuous measure in $V$, its Yao--Yao equipartition induced by $(u_2, \ldots, u_d)$ is unique.  The proof is analogous for $B$ and $\mu|_B$.
\end{proof}

Given an $(\alpha, \beta)$-partition, each $C_i$ is split into two convex regions $A_i$ and $B_i$.  The boundary between $A_i$ and $B_i$ must therefore be a hyperplane section.  We denote the union of all these boundary pieces the \textit{wings} of the partition.  As we vary $\alpha, \beta$, the frame remains constant while the center and the wings of the partition can change.

Another consequence of the lemmas above is that, for a fixed absolutely continuous finite measure $\mu$ in $\rr^d$, there exists a line $\ell$ parallel to $u_1$ such that
\begin{itemize}
	\item every $(\alpha, \beta)$-partition of $\mu$ has its center on $\ell$,
	\item the boundary of every part $A_i$ of an $(\alpha, \beta)$-partition of $\mu$ contains the infinite ray $\{c - \lambda u_1 : \lambda \ge 0\}$ where $c$ is the center of the $(\alpha, \beta)$-partition, and
	 \item the boundary of every part $B_i$ of an $(\alpha, \beta)$-partition of $\mu$ contains the infinite ray $\{c + \lambda u_1 : \lambda \ge 0\}$ where $c$ is the center of the $(\alpha, \beta)$-partition.
\end{itemize}

We know by the arguments of Yao and Yao that every hyperplane $H$ must avoid the interior of at least one of the regions. Let left and right be the directions defined by $-u_1$ and $u_1$, respectively. If $H\cap \ell$ is a single point $p$, then it must avoid one of the regions $B_i$ if $p$ is at or left of the center $c$ and one of the regions $A_i$ if $p$ is at or right of the center $c$.

The previous results can be improved to give us significant information about the $k$-skeletons of the cones forming and $(\alpha, \beta)$-partition.

\begin{lemma}\label{lem:skeleton}
Let  $k < d$ be positive integers, $(u_1, \ldots, u_d)$ be an orthonormal basis of $\rr^d$ and $\mu$ be a finite absolutely continuous measure.  For any $(\alpha, \beta)$-partition of $\mu$ induced by the basis $(u_1, \ldots, u_d)$, the union of the $k$-skeletons of the parts of the partition contains the translate of $\operatorname{span}\{u_1,\ldots, u_k\}$ that goes through the center of the partition.
\end{lemma}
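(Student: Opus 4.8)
The plan is to induct on the dimension $d$, mirroring the recursive construction of $(\alpha,\beta)$-partitions, and to isolate the top case $k=d-1$ as a separate, direct argument. Throughout I would use that every cell of an $(\alpha,\beta)$-partition is a polyhedral cone with apex at the center $c$: this is clear in dimension $1$, and in higher dimensions each cell has the form $p_v^{-1}(C')\cap \overline{H_+}$, the intersection of the affine preimage of a polyhedral cone $C'\subseteq H$ with $\overline{H_+}$, the closed upper half-space, whose boundary passes through $c$; this is again a polyhedral cone with apex $c$. In particular the $k$-skeleton (the union of the faces of dimension at most $k$) of each cell is well defined. Write $W_k = c + \operatorname{span}\{u_1,\ldots,u_k\}$ for the affine subspace in the statement; since $k\le d-1$ we have $W_k\subseteq H$, where $H$ is the horizontal halving hyperplane through $c$ used in the first step of the construction.

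For the case $k=d-1$, note that $W_{d-1}=H$. The upper cells of the partition are exactly the sets $p_v^{-1}(C')\cap \overline{H_+}$ with $C'$ ranging over the parts of the $(d-1)$-dimensional $(\alpha,\beta)$-partition $P_+$ of $H$. A short computation using $v_d\neq 0$ shows that each such cell meets $H$ precisely in $C'$, so $H$ is a supporting hyperplane of the cell and $C'$ is a facet of it. Since $P_+$ tiles $H$, the union of these facets is all of $H$, giving $W_{d-1}=H\subseteq$ the union of the $(d-1)$-skeletons.

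For $k\le d-2$ I would reduce from dimension $d$ to dimension $d-1$. Applying the induction hypothesis to the $(\alpha,\beta)$-partition $P_+$ of $p_v(\mu_+)$ in $H$ (with basis $u_1,\ldots,u_{d-1}$ and the same center $c$) gives that the union of the $k$-skeletons of the parts of $P_+$ contains $c+\operatorname{span}\{u_1,\ldots,u_k\}=W_k$. The crucial transfer step is that each part $C'$ of $P_+$ is a facet of the corresponding cell $\tilde C = p_v^{-1}(C')\cap\overline{H_+}$ lying in the supporting hyperplane $H$; by the standard fact that a subset of a face $F$ of a polyhedron is itself a face of the polyhedron if and only if it is a face of $F$, every $j$-dimensional face of $C'$ is a $j$-dimensional face of $\tilde C$. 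Hence the $k$-skeleton of $C'$ lies inside the $k$-skeleton of $\tilde C$, so any $x\in W_k$ lying in the $k$-skeleton of some $C'$ lies in the $k$-skeleton of $\tilde C$. This covers $W_k$ and closes the induction, the base case $d=2$ (where only $k=1=d-1$ occurs) being the direct argument above.

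The main obstacle, and the step deserving the most care, is the face-transfer claim: one must be sure that taking the affine preimage under $p_v$ and intersecting with $\overline{H_+}$ neither merges nor splits the faces of $C'$ lying in $H$, nor raises their dimension. This is exactly why I would first verify $\tilde C\cap H = C'$ (using $v_d\neq 0$) so that $H$ is a genuine supporting hyperplane with $C'$ as a face of $\tilde C$; once this is established, the elementary fact about face lattices identifies the faces of $C'$ with the faces of $\tilde C$ contained in $H$ and does the rest.
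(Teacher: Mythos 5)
Your proof is correct, but it takes a genuinely different route from the paper's. The paper inducts on the skeleton dimension $k$, increasing from the base case $k=1$ (which is exactly the ray structure of the parts $A_i$ and $B_i$): it projects orthogonally along $u_1$ onto $V=u_1^{\perp}$ and invokes Lemma \ref{lemma:structure2}, which identifies the projected frame with the Yao--Yao partition of $\pi(\mu)$; by induction the $(k-1)$-skeletons of that partition cover the translate of $\operatorname{span}\{u_2,\ldots,u_k\}$, and pulling back under $\pi$ extends these boundaries by $\pm u_1$ (by $-u_1$ inside the parts $A_i$, by $+u_1$ inside the parts $B_i$), giving the $k$-dimensional piece. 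You instead induct on the dimension $d$ (equivalently on $d-k$), peeling off $u_d$ rather than $u_1$: you work directly with the recursive construction over the halving hyperplane $H$, observe that the case $k=d-1$ is immediate because $W_{d-1}=H$ is tiled by the facets $\tilde C\cap H=C'$ of the upper cells, and for $k\le d-2$ you transfer the inductive statement for $P_+$ into the ambient partition via the face-lattice fact that a face of a face is a face. This is essentially the alternative the paper only gestures at in the remark after its proof (``one can also argue this lemma by reducing the value of $k$ rather than increasing it'') but never carries out. As for what each approach buys: the paper's argument is shorter because the heavy lifting was already done in Lemmas \ref{lemma:structure1} and \ref{lemma:structure2}, though it leaves somewhat implicit why the lifted boundary pieces really lie in $k$-dimensional faces; your argument needs no frame lemma at all, makes the polyhedral-cone structure of the cells explicit, and your insistence on first verifying $\tilde C\cap H=C'$ (using $v_d\neq 0$), so that $H$ is a genuine supporting hyperplane and the face transfer is legitimate, is exactly the right point of care.
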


\begin{proof}
We proceed by induction on $k$.  For $k=1$, the previous results imply the result.  Now assume that $k>1$ and we know the result holds for $k-1$ and any $d \ge k$.  With the notation of the previous lemmas, the frame of the $(\alpha, \beta)$-partition $P$ projects onto $V$ to the Yao--Yao partition $P'$ of $\pi (\mu)$ with basis $(u_2,\ldots, u_d)$.  We know by induction that the union of the $(k-1)$-skeletons of the parts of $P'$ contain the translate of $\operatorname{span}\{u_2,\ldots, u_k\}$.  When we take the inverse of $\pi(\cdot)$ we are extending these boundaries by $\pm u_1$ (one direction with the parts $A_i$ and the other with the parts $B_i$), which finishes the proof.
\end{proof}

If we consider the lemma above with $k=d-1$, we obtain the halving hyperplane at the core of the Yao--Yao construction.  This way one can also argue this lemma by reducing the value of $k$ rather than increasing it.

Since the location of the center $p$ on the line $\ell$ is important, we prove an additional technical lemma to improve Corollary \ref{cor:restriction}.  We want to prove that a center for an $(\alpha',\beta')$-partition of $\mu|_{B}$ is further right than a center for $\mu$.

\begin{lemma}\label{lem:technical-position}
Let $d$ be a positive integer, $(u_1, \ldots, u_d)$ be an orthonormal basis of $\rr^d$, and $\alpha, \alpha', \beta, \beta'$ be positive real numbers such that $\alpha + \beta = \alpha' + \beta' = 1$.  For an absolutely continuous finite measure $\mu$ in $\rr^d$, let $C(\mu)$ be the center of its $(\alpha, \beta)$-partition induced by the basis $(u_1, \ldots, u_d)$, and let $C(\mu|_{B})$ be the center of the $(\alpha',\beta')$-partition of $\mu|_B$.  Then, $C(\mu|_{B})$ is farther right than $C(\mu)$
\end{lemma}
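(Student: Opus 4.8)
The plan is to argue by induction on $d$, first using the structural lemmas to place both centers on a common line and a common halving hyperplane. By Corollary \ref{cor:restriction}, $\pi(\mu|_B)$ has the same Yao--Yao equipartition $D_1,\dots,D_{2^{d-1}}$ on $V=u_1^\perp$ as $\pi(\mu)$; hence $\pi(C(\mu))=\pi(C(\mu|_B))$, so both centers lie on the same line $\ell$ parallel to $u_1$. Since $u_1\perp u_d$, every point of $\ell$ has the same $u_d$-coordinate, and because the halving hyperplane of an $(\alpha,\beta)$-partition is the hyperplane orthogonal to $u_d$ through its center, the halving hyperplanes of $\mu$ and of $\mu|_B$ coincide; call it $H$. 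The base case $d=1$ is immediate: $B$ is the ray to the right of $C(\mu)$, so the $(\alpha',\beta')$-center of $\mu|_B$, which lies in the interior of $\operatorname{supp}(\mu|_B)\subset B$, is strictly to the right of $C(\mu)$.

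For the inductive step, let $v$ be the projection vector of the $(\alpha,\beta)$-partition of $\mu$ and set $\nu^{\pm}=p_v(\mu_{\pm})$ on $H$; by construction these share the $(\alpha,\beta)$-center $c:=C(\mu)$, and I write $B^{\pm}=\bigcup_i B_i$ for the right regions of their $(\alpha,\beta)$-partitions. Using $\pi\circ p_v=p_{\pi(v)}\circ\pi$ together with $B\cap H_{\pm}=p_v^{-1}(B^{\pm})\cap H_{\pm}$, one gets $p_v((\mu|_B)_{\pm})=\nu^{\pm}|_{B^{\pm}}$. Applying the inductive hypothesis on $H$ (dimension $d-1$, basis $u_1,\dots,u_{d-1}$) to the measures $\nu^{+}$ and $\nu^{-}$ shows that the $(\alpha',\beta')$-center of $\nu^{\pm}|_{B^{\pm}}$ is strictly to the right of $c$. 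Writing $g_{\pm}(w)$ for the $(\alpha',\beta')$-center of $p_w((\mu|_B)_{\pm})$ in $H$, this reads $\langle g_{+}(v),u_1\rangle>\langle c,u_1\rangle$ and $\langle g_{-}(v),u_1\rangle>\langle c,u_1\rangle$.

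The crux is that the $(\alpha',\beta')$-partition of $\mu|_B$ uses its own projection vector $v'$, namely the unique direction with $g_{+}(v')=g_{-}(v')$; this common value is $c'=C(\mu|_B)$, and I must show $\langle c',u_1\rangle>\langle c,u_1\rangle$ despite $v'\neq v$. Here I would invoke the monotonicity recorded just before the lemma: as the direction changes by $h\parallel H$, the quantity $\langle g_{+},h\rangle$ strictly decreases while $\langle g_{-},h\rangle$ strictly increases. When $d=2$ this closes the argument cleanly, since the direction space is one-dimensional: $g_{\pm}$ are scalar, $g_{+}$ is decreasing and $g_{-}$ increasing in the direction parameter, and they start (at $v$) both above $\langle c,u_1\rangle$ and meet at $c'$; whichever side $v'$ lies on relative to $v$, one of the two monotone functions is evaluated where its value is at least its value at $v$, forcing $\langle c',u_1\rangle>\langle c,u_1\rangle$.

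The main obstacle is promoting this last step to $d\ge 3$. There the displacement $v'-v$ is genuinely multidimensional, and the stated inequalities only control the component of $c'$ along $v'-v$, not its $u_1$-coordinate; treating $g_{+},g_{-}$ as abstract monotone maps is therefore insufficient. Closing the gap should instead exploit that $g_{\pm}$ are produced by the recursive construction, so that the $u_1$-coordinate of each projected center is itself pinned down by a lower-dimensional balancing over the shared frame cones $C_i$; I expect one must track the $u_1$-coordinate through the recursion and combine the opposite-direction monotonicity with the inductive right-of-$c$ conclusions at each level, rather than only at the top level.
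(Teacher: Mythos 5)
Your setup is exactly the paper's: induction on $d$, the identification $p_v((\mu|_B)_{\pm})=(p_v(\mu_{\pm}))|_{B^{\pm}}$, the inductive conclusion that the two projected centers $g_{\pm}(v)$ lie strictly to the right of $c=C(\mu)$, and the opposite-direction monotonicity of centers as the projection vector moves. But the gap you flag for $d\ge 3$ is a genuine gap in your write-up, and the idea needed to close it is not the multi-level bookkeeping you propose at the end; it is one observation at the top level, which you already have the tools to make. By Lemma \ref{lemma:structure2}, the frame of the $(\alpha,\beta)$-partition of $\mu$ projects under $\pi$ onto a Yao--Yao partition of $\pi(\mu)$ with projection vector $\pi(v)$, and the frame of the $(\alpha',\beta')$-partition of $\mu|_B$ projects onto the Yao--Yao equipartition of $\pi(\mu|_B)$ with projection vector $\pi(v')$. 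By Corollary \ref{cor:restriction} these two projected partitions are \emph{the same} partition of $V$, and a Yao--Yao partition determines its projection vector up to scalar (its cells in the upper half are cones in that direction). Hence, after rescaling, $\pi(v)=\pi(v')$, i.e.\ the displacement $h=v'-v$ is \emph{parallel to $u_1$}; it is never ``genuinely multidimensional.'' This is precisely the step in the paper's proof phrased as ``we can choose scalar multiples of $v$ and $v'$ so that $\pi(v)=\pi(v')$, as the frame of both partitions coincide.''

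Once $h=\lambda u_1$ with $\lambda\neq 0$, the monotonicity inequalities
\begin{align*}
\langle g_{+}(v'),h\rangle &< \langle g_{+}(v),h\rangle, \\
\langle g_{-}(v'),h\rangle &> \langle g_{-}(v),h\rangle
\end{align*}
are statements about $u_1$-coordinates: passing from $v$ to $v'$, one of $g_{+},g_{-}$ has its $u_1$-coordinate strictly increase and the other strictly decrease. Since $g_{+}(v')=g_{-}(v')=C(\mu|_B)$, its $u_1$-coordinate lies strictly between those of $g_{+}(v)$ and $g_{-}(v)$, both of which exceed that of $c$ by your inductive step; and if $g_{+}(v)=g_{-}(v)$, uniqueness of the projection vector forces $v'=v$ and there is nothing to prove. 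In other words, your $d=2$ argument applies verbatim in every dimension once $h$ is known to point along $u_1$, and that is exactly how the paper concludes.
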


\begin{proof}
    We proceed by induction on $d$.  For $d=1$, the result is clear.  Assume the result holds for $d-1$.  Let $v$ be the projection vector of the $(\alpha, \beta)$-partition of $\mu$, and let $v'$ be the projection vector of the $(\alpha', \beta')$-partition of $\mu|_B$.  We know by the previous lemmas that we can choose scalar multiples of $v$ and $v'$ so that $\pi(v) = \pi(v')$, as the frame of both partitions coincide.  Consider the measures $p_v(\mu_+|_{B})$ and $p_v(\mu_-|_{B})$ on the halving hyperplane $H$.  By the previous lemmas, their $(\alpha',\beta')$-partition shares the frame of $p_v(\mu)$, and their centers $C_1, C_2$ respectively must be to the right of $C(p_v(\mu))=C(\mu)$.  
    
    If $C_1 = C_2$, this means that $v=v'$ and we are done.  If $C_1 \neq C_2$, as we move $v$ to $v'$ the centers $C_1$ and $C_2$ move in opposite directions according to $u_1$ (formally, one of $\langle C_1, u_1\rangle, \langle C_2, u_1\rangle$ increases and the other decreases.  This means that $C(\mu|_{B})$ must be between $C_1$ and $C_2$, and so it is to the right of $C(\mu)$.
\end{proof}

\begin{definition}\label{def:multicenter}
We define a new family of partitions, called \textit{multicenter partitions} with $t$ centers for a measure $\mu$ in $\R^d$ in the following way.
\begin{itemize}
    \item For $t=1$, we take a classic Yao--Yao equipartition of $\mu$.
    \item For $t>1$, let $C_1, \ldots, C_{2^{d-1}}$ be the frame of all possible $(\alpha, \beta)$-partitions of $\mu$.  We first construct a $(1/(t+1), t/(t+1))$-partition of $\mu$ and denote its regions by $A_1, \ldots, A_{2^{d-1}}, B_1, \ldots, B_{2^{d-1}}$.  Let $P$ be a multicenter partition of $\mu|_{B}$ with $t-1$ centers.  Our multicenter partition $Q$ with $t$ centers is defined as
    \[
    Q = \{A_i : i =1,\ldots, 2^{d-1}\}\cup \{K \cap B_i : i = 1,\ldots, 2^{d-1}, K \subset C_i, K \in P\}.
    \]
\end{itemize}

We define the $t$ centers as the union of the center of the $(1/(t+1), t/(t+1))$-partition of $\mu$ and the $t-1$ centers of the multicenter partition $P$ we used in the construction.

\end{definition}

\begin{figure}[ht]
\centering
\includegraphics[scale=0.75]{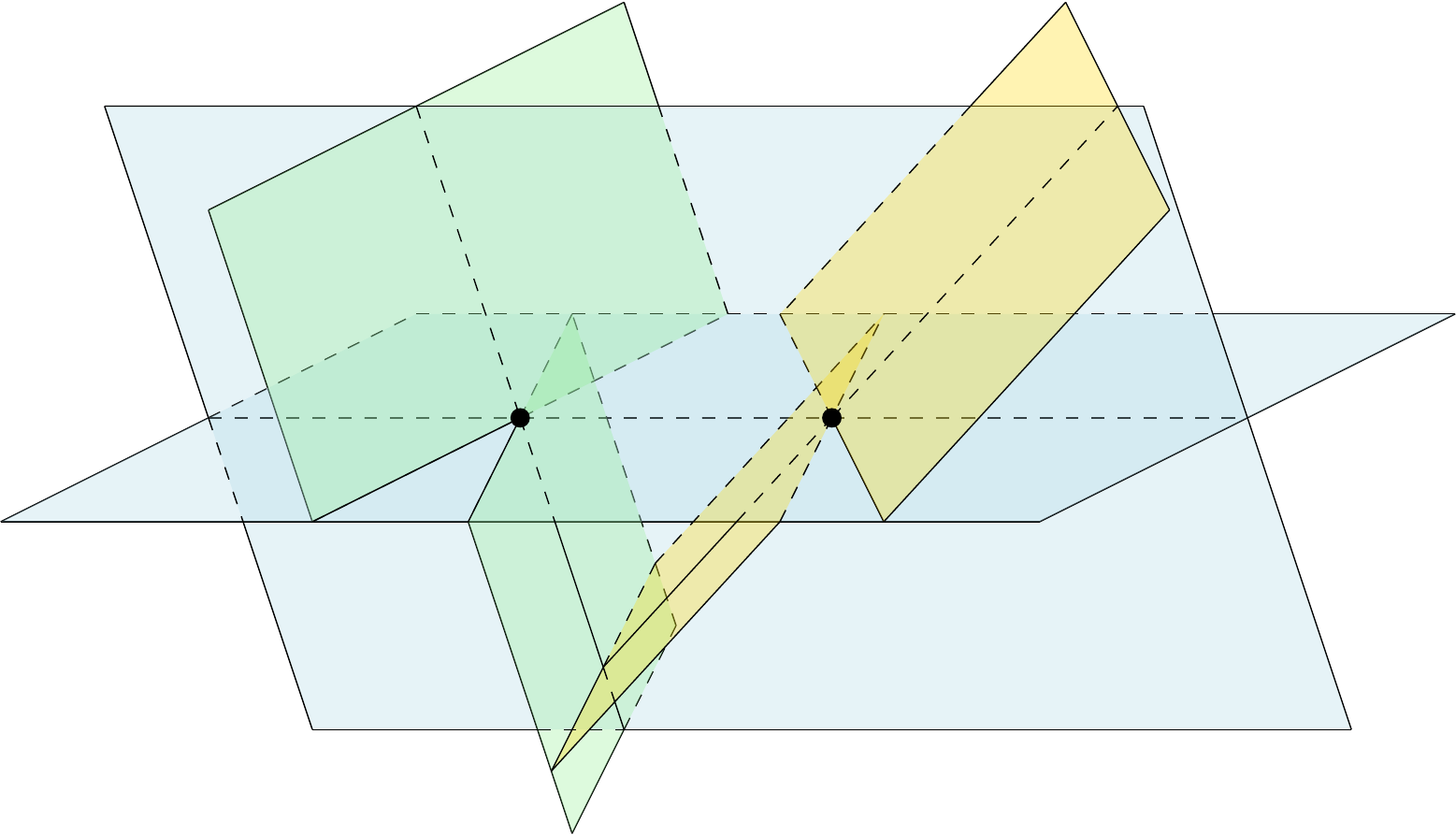}
\caption{Example of a multicenter partition in $\R^3$ with two centers.}
\label{fig:R3convexpartition}
\end{figure}

Each part of a multicenter partition is contained in one of the parts of the frame of $\mu$.  Each region of the frame is iteratively partitioned into $t+1$ convex regions by the wings of each $(\alpha, \beta)$-partition we took.  A multicenter partition with $t$ centers will have exactly $(t+1)2^{d-1}$ convex parts of equal measure.  An example is shown in Figure \ref{fig:R3convexpartition}.  Notice that the subdivision of the right-most center doesn't extend past the wings induced by the first partition (induced by the left-most center).  Now we can prove Theorem \ref{thm:generalyao}.

\section{Proof of Theorem \ref{thm:generalyao}}\label{sect:proofgeneralyao}

 We want to show that for a multicenter partition with $t$ centers every hyperplane misses the interior of at least $t$ regions.  Since a multicenter partition with $t$ centers has $(t+1)2^{d-1}$ regions, all of them convex, we would prove Theorem \ref{thm:generalyao}.  In this section, we use a fixed orthonormal basis $(u_1, u_2, \ldots, u_d)$ and induct on $t$.

\begin{theorem}
Let $\mu$ be a finite absolutely continuous measure in $\R^d$, and let $P$ be a multicenter partition of $\mu$ with $t$ centers.  Every hyperplane misses the interior of at least $t$ regions.
\end{theorem}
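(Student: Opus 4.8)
The plan is to induct on $t$, following the recursive definition of multicenter partitions (Definition \ref{def:multicenter}). The base case $t=1$ is exactly Theorem \ref{thm:yaoyao}: a Yao--Yao equipartition has the property that every hyperplane misses the interior of at least one region. For the inductive step I would fix the top-level $(1/(t+1),t/(t+1))$-partition, with center $c_1$ and regions $A_1,\dots,A_{2^{d-1}},B_1,\dots,B_{2^{d-1}}$, and let $P$ denote the multicenter partition of $\mu|_{B}$ with $t-1$ centers used to subdivide the $B_i$. By Corollary \ref{cor:restriction} the frame of $\mu|_{B}$ agrees with that of $\mu$, so all relevant centers lie on the common line $\ell$ parallel to $u_1$, and the avoidance property recorded after Lemma \ref{lem:skeleton} applies at the top level. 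Given a hyperplane $H$, I would split into cases according to how $H$ meets $\ell$, and, in the generic case where $H\cap\ell$ is a single point $p$, according to whether $p$ lies at or left of $c_1$, or at or right of $c_1$.

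The case where $p$ is at or left of $c_1$ is where the construction pays off, and I expect it to be the cleanest. Here the top-level avoidance property yields a region $B_{i_1}$ whose interior $H$ misses entirely. The key bookkeeping observation is that a $t$-center partition has $(t+1)2^{d-1}$ regions, distributed as $t+1$ per frame cell (Definition \ref{def:multicenter} and the discussion following it): one of them is $A_{i_1}$, and the remaining $t$ are the subdivisions $K\cap B_{i_1}$ of $B_{i_1}$ with $K\in P$ and $K\subset C_{i_1}$. Since $\operatorname{int}(K\cap B_{i_1})\subset\operatorname{int}(B_{i_1})$, missing $\operatorname{int}(B_{i_1})$ forces $H$ to miss all $t$ of these subdivisions at once, so $H$ already misses $t$ regions without invoking the inductive hypothesis.

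In the complementary case where $p$ is at or right of $c_1$, the top-level property instead yields an $A_i$ whose interior $H$ misses; this is an ``$A$-type'' region. Separately, the inductive hypothesis applied to $P$ (which has $t-1$ centers) gives $t-1$ distinct regions $K$ of $P$ whose interiors $H$ misses. Each such $K$ is contained in a unique frame cell $C_i$ and corresponds to the region $K\cap B_i$ of the partition; because $\operatorname{int}(K\cap B_i)\subset\operatorname{int}(K)$, the hyperplane misses each of these as well, and the assignment $K\mapsto K\cap B_i$ is injective, so these are $t-1$ distinct ``$B$-type'' regions. As the single $A_i$ is disjoint from every $B$-type region, the two collections cannot overlap and $H$ misses at least $t$ regions. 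The step to watch here is precisely this distinctness argument: the whole point of separating the $A$-type region from the $B$-type regions is to guarantee that the $t$-th missed region is genuinely new.

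Finally I would dispose of the degenerate case in which $H$ is parallel to $u_1$, so $H$ either misses $\ell$ or contains it. Projecting orthogonally by $\pi\colon\rr^d\to V=u_1^{\perp}$, Lemma \ref{lemma:structure2} says the frame projects to the Yao--Yao equipartition $D_1,\dots,D_{2^{d-1}}$ of $\pi(\mu)$ in the $(d-1)$-dimensional space $V$, while $\pi(H)$ is a hyperplane of $V$ and $H=\pi^{-1}(\pi(H))$. By Theorem \ref{thm:yaoyao} applied in $V$, $\pi(H)$ misses some $\operatorname{int}(D_{i_0})$; since the interior of every region inside $C_{i_0}$ projects into $\operatorname{int}(D_{i_0})$, the hyperplane $H$ misses all $t+1$ of them, which is more than enough. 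The main obstacle in the whole argument is not any single estimate but organizing the case analysis so that the top-level avoidance property and the inductive hypothesis together always produce $t$ pairwise distinct regions; the left-of-$c_1$ case is what shows that a naive ``induction plus one extra region'' scheme is not automatic and must be replaced by the direct count through a single fully missed $B_{i_1}$.
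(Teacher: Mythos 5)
Your proposal is correct and follows essentially the same argument as the paper: induction on $t$ with the same three-way case analysis on $H\cap\ell$ (left of the center giving a fully missed $B_{i_1}$ and hence all $t$ of its subdivisions, right of the center giving one $A_i$ plus $t-1$ regions from the inductive hypothesis on $\mu|_B$, and the degenerate case handled by projecting onto $u_1^{\perp}$ and using the Yao--Yao property of the frame). Your write-up is if anything slightly more careful than the paper's, making explicit the injectivity of $K\mapsto K\cap B_i$ and the disjointness of the $A$-type and $B$-type regions.
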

\begin{proof}
We proceed inductively.  For $t=1$, this is the result of Yao and Yao.  Assume the result holds for $t-1$.  Let $Q$ be the multicenter partition, with parts labeled as in Definition \ref{def:multicenter}.  Let $C(\mu)$ be the center of the $(1/(t+1),t/(t+1))$-partition of $\mu$, and let $\ell$ be the line with direction $u_1$ through $C(\mu)$.

Let $L$ be a hyperplane.  We first assume that $L \cap \ell$ is a single point.  If $L \cap \ell$ is to the right or coincides with $C(\mu)$, then $L$ misses the interior of one of the regions $A_i$.  By the induction hypothesis, $L$ misses the interior of at least $t-1$ regions of the form $B_{i'} \cap K$ for some $i'$.  If $L \cap  \ell$ is to the left of $C(\mu)$, then $L$ avoids the interior of some set $B_i$.  This means that $L$ avoids all the $t$ parts of the partition of the form $B_i \cap K$.

Any hyperplane $L$ for which $L \cap \ell$ is not a point contains a line parallel to $\ell$.  Therefore, $\pi(L)$, the projection of $L$ onto the subspace orthogonal to $u_1$ is not surjective.  Since the projection of the frame $C_1, \ldots, C_{2^{d-1}}$ is a Yao--Yao partition, this means that $L$ misses the interior of one of the $C_i$.  With this, we have that $L$ misses the interior of the $t+1$ regions in the subdivision of that $C_i$.
\end{proof}

The proof above does not use Lemma \ref{lem:technical-position}.  However, that lemma can help us get intuition regarding which regions of a multicenter partition are avoided.  If $L \cap \ell$ is between the $j$-th and the $(j+1)$-th center of the partition, we can guarantee $L$ avoids $j$ ``left regions $A_i$'' in the recursive definition and $t-j$ right regions which at some point were of the form $B_i \cap K$ in the recursive definition.  The union of the right regions avoided is convex.  An example is shown in Figure \ref{sect:linetransversal}.  In that figure, one of the avoided regions is shaded.  We can see that the section right of the shaded region is also missed by the hyperplane, and their union is convex.

\begin{figure}[ht]
\centering
\includegraphics[scale=0.75]{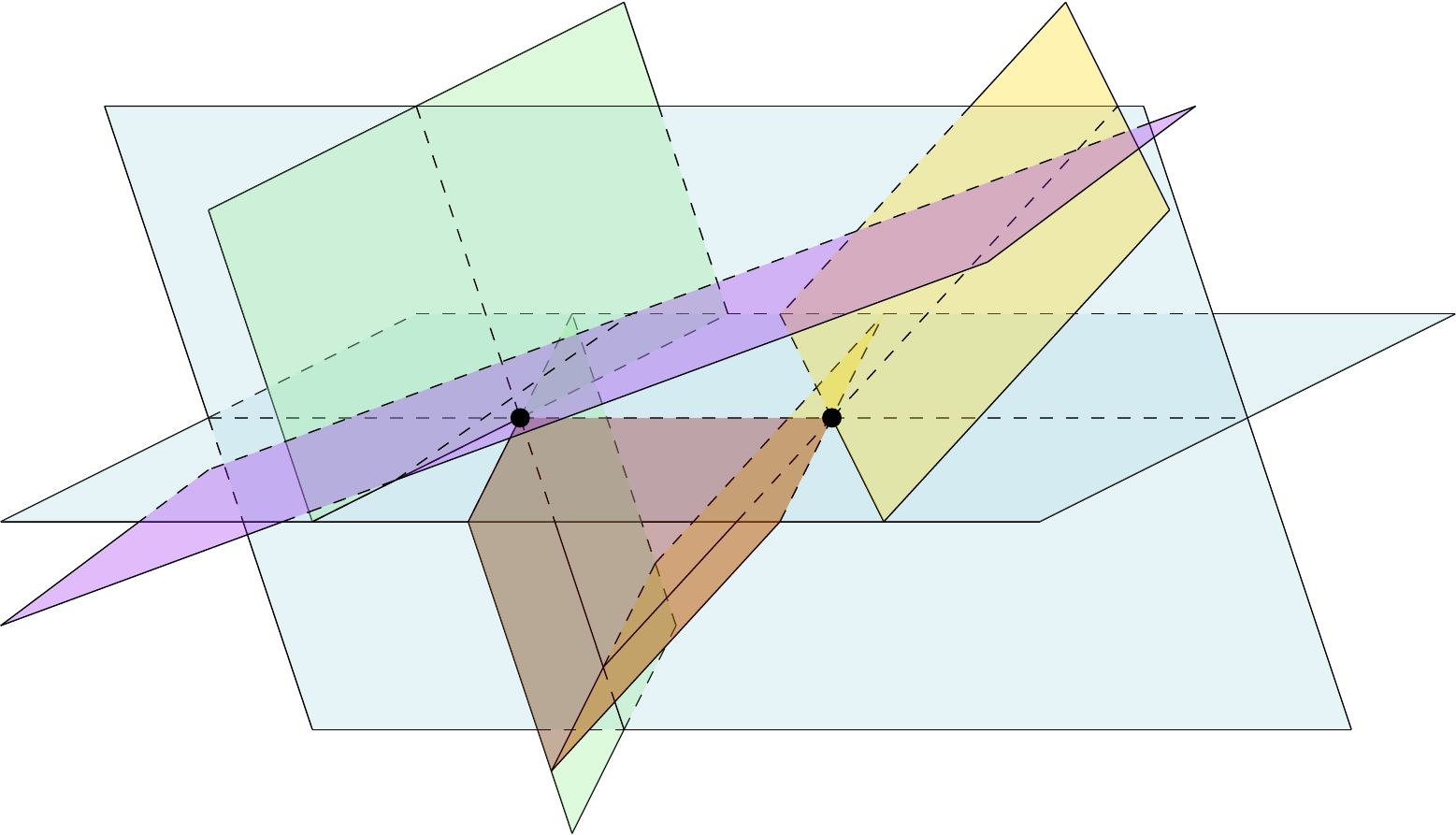}
\caption{Hyperplane transversal for a multicenter partition in $\R^3$}
\label{fig:R3avoidance}
\end{figure}

\section{Line transversal problem}\label{sect:linetransversal}

We first introduce the notion of complexity for a partition. The \textit{complexity} of a convex partition refers to the smallest number of hyperplanes whose union contains all the boundaries between parts of the partition.

If $P$ is a partition into $n$ parts with complexity $k$ then every line misses the interior of at least $n - (k+1)$ regions, so we care about minimizing the complexity to maximize the number of parts missed by any line. The partitions we use involve iterated partitions by successive hyperplanes.  These are similar to those used recently for high-dimensional versions of the necklace splitting problem \cites{deLongueville:2006uo, Karasev:2016cn, Blagojevic:2018gt}.  The key difference is that we wish each part of the partition to have the same size, as opposed to distribute the parts among a fixed number of participants.

To obtain a general bound for the minimum complexity needed to find an equipartition of any measure, we use the ham sandwich theorem.  This was first proved by Steinhaus, who attributed the result to Banach \cite{Steinhaus1938}.

\begin{theorem}[Ham sandwich theorem]\label{thm:hamsandwich}
Given $d$ finite measures in $\R^d$, each absolutely continuous with respect to the Lebesgue measure, there exists a hyperplane that divides $\R^d$ into two half-spaces of the same size with respect to each measure.
\end{theorem}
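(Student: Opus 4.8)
The plan is to deduce the ham sandwich theorem from the Borsuk--Ulam theorem via the standard linearization of the space of half-spaces. First I would identify each oriented affine half-space of $\R^d$ with a point of the sphere $S^d \subset \R^{d+1}$: to a point $u = (u_0, u_1, \ldots, u_d) \in S^d$ I associate the closed half-space
\[
H_u = \{x \in \R^d : u_0 + u_1 x_1 + \cdots + u_d x_d \ge 0\}.
\]
The antipode $-u$ corresponds to the closure of the complementary half-space, and because each $\mu_i$ is absolutely continuous the bounding hyperplane carries no mass, so $\mu_i(H_u) + \mu_i(H_{-u}) = \mu_i(\R^d)$ for every $i$.

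Next I would define the map $g : S^d \to \R^d$ by
\[
g(u) = \left( \mu_1(H_u) - \tfrac{1}{2}\mu_1(\R^d), \ \ldots, \ \mu_d(H_u) - \tfrac{1}{2}\mu_d(\R^d) \right).
\]
The measure relation above shows that $g$ is \emph{odd}, that is $g(-u) = -g(u)$. Applying the Borsuk--Ulam theorem to the continuous map $g$ from $S^d$ into $\R^d$ produces a point $u^*$ with $g(u^*) = g(-u^*)$; combined with oddness this forces $g(u^*) = 0$, meaning $\mu_i(H_{u^*}) = \tfrac12 \mu_i(\R^d)$ for every $i$. The boundary hyperplane of $H_{u^*}$ is then the desired common bisector.

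Two points need care, and only the second is substantive. The degenerate directions $u = (\pm 1, 0, \ldots, 0)$ give an empty or full half-space rather than an honest hyperplane; but at such $u$ one has $\mu_i(H_u) \in \{0, \mu_i(\R^d)\}$, which cannot equal $\tfrac12\mu_i(\R^d)$, so the zero $u^*$ automatically corresponds to a genuine hyperplane and no special case arises. The main technical obstacle I expect is verifying that $g$ is continuous, and this is exactly where absolute continuity with respect to the Lebesgue measure is used: perturbing $u$ slightly moves the boundary hyperplane slightly, and the symmetric difference of the corresponding half-spaces is a thin wedge whose $\mu_i$-measure tends to zero, since $\mu_i$ assigns no mass to the limiting hyperplane and has a density. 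Establishing this continuity carefully, rather than the topological input, which is black-boxed by Borsuk--Ulam, is the only real work in the argument.
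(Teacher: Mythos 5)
Your proof is correct, but there is nothing in the paper to compare it against: Theorem \ref{thm:hamsandwich} is quoted as a classical black box, attributed to Banach and first published by Steinhaus, and is used in Section \ref{sect:linetransversal} only as a tool (in Lemma \ref{lem:d2xlemma} and the proof of Theorem \ref{thm:smallestkforn}). Your argument is the canonical Borsuk--Ulam proof via the one-point-compactified space of affine half-spaces, and it is sound as written. Two small refinements are worth making. For the continuity of $g$, the cleanest route is dominated convergence rather than the wedge estimate: if $u_n \to u$ in $S^d$, then $\mathds{1}_{H_{u_n}} \to \mathds{1}_{H_u}$ pointwise off the bounding hyperplane of $H_u$, which is $\mu_i$-null by absolute continuity, so $\mu_i(H_{u_n}) \to \mu_i(H_u)$; this also handles the poles $u = (\pm 1, 0, \ldots, 0)$, where your ``thin wedge'' picture degenerates --- there the symmetric difference is a half-space receding to infinity rather than a wedge, and its measure tends to zero because each $\mu_i$ is \emph{finite} (the constant $1$ is a dominating function), not because of the density. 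Second, your observation that the zero $u^*$ cannot be a pole tacitly assumes $\mu_i(\R^d) > 0$ for at least one $i$; if every $\mu_i$ is the zero measure the theorem is trivially true (any hyperplane works), so this costs nothing, but under the bare hypothesis of the statement it deserves a sentence. Neither point is a gap; the proposal is a complete and standard proof of a result the paper deliberately imports from the literature.
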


With this tool, we can introduce a lemma that allows us to save many hyperplanes as the dimension increases. 

\begin{lemma}\label{lem:d2xlemma}
Let $d$ and $x$ be positive integers.  For any absolutely continuous measure $\mu$ in $\R^d$ there exists a convex equipartition into $d2^{x}$ sets of complexity $(d-1) + 2^{x} - 1$.
\end{lemma}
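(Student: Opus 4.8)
The plan is to build the partition in two stages and induct on $x$: first cut $\mu$ into $d$ equal slabs, and then refine with a depth-$x$ bisection tree in which each hyperplane bisects $d$ regions at once via the ham sandwich theorem. For the first stage (accounting for the factor $d$), I would slice $\R^d$ with $d-1$ parallel hyperplanes orthogonal to $u_1$ so that the $d$ resulting slabs have equal measure $\mu(\R^d)/d$; this is possible since $\mu$ is absolutely continuous, each slab is convex, and the complexity so far is exactly $d-1$.

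The heart of the argument is the refinement step, which rests on the observation that a single hyperplane can simultaneously bisect any $d$ of the current regions. Indeed, restricting $\mu$ to $d$ chosen convex regions $R_1, \dots, R_d$ produces $d$ absolutely continuous measures $\mu|_{R_1}, \dots, \mu|_{R_d}$ in $\R^d$, so the ham sandwich theorem (Theorem \ref{thm:hamsandwich}) yields a hyperplane $h$ with $\mu(R_j \cap h^+) = \mu(R_j \cap h^-)$ for every $j$, where $h^+, h^-$ denote the two closed half-spaces bounded by $h$. Restricting the cut to these regions splits each convex $R_j$ into two convex halves of equal measure, and all the newly created boundaries lie on the single hyperplane $h$, so this step adds only $1$ to the complexity.

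I would then iterate this step. Starting from the $d$ slabs, at level $j$ there are $2^{j-1}d$ convex regions, all of equal $\mu$-measure; I group them arbitrarily into $2^{j-1}$ blocks of size $d$ and apply one ham sandwich cut to each block, producing $2^{j}d$ equal regions at a cost of $2^{j-1}$ new hyperplanes. After $x$ levels there are $d2^{x}$ convex regions of equal measure $\mu(\R^d)/(d2^x)$, and the total complexity is $(d-1) + \sum_{j=1}^{x} 2^{j-1} = (d-1) + (2^{x}-1)$, which is exactly the claimed bound.

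The step requiring the most care is the complexity bookkeeping. Although each ham sandwich hyperplane $h$, viewed as a full hyperplane, may pass through regions belonging to other blocks, I never use it to cut those regions, so it creates no boundaries there; their subdivision comes from their own block's hyperplane. Since complexity counts only hyperplanes whose union contains the actual boundaries of the partition, each block contributes exactly one hyperplane regardless of how $h$ behaves outside that block, and distinct blocks are handled by distinct (generic) hyperplanes. I should also record two routine points: at every level the current number of regions is the multiple $2^{j-1}d$ of $d$, so the grouping into blocks of size $d$ is always possible, and subdividing a convex partition of $\R^d$ by hyperplane cuts again yields a convex partition of $\R^d$, so the final family is a genuine convex equipartition of the required complexity.
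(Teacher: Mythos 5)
Your proof is correct and takes essentially the same approach as the paper's: cut $\mu$ into $d$ equal slabs with $d-1$ parallel hyperplanes, then perform a depth-$x$ recursion of ham sandwich cuts, each bisecting $d$ regions at once, for a total of $(d-1) + 2^{x}-1$ hyperplanes. The only cosmetic difference is that you regroup regions arbitrarily into blocks of $d$ at each level, whereas the paper follows the natural binary-tree grouping given by the positive and negative sides of the previous cuts; both yield the identical count and the same complexity bookkeeping.
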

\begin{proof}
First, we partition the measure into convex regions of equal measure using $d-1$ parallel hyperplanes. Now we can treat the $d$ regions as distinct measures. By the ham sandwich theorem, we can bisect all the regions using a hyperplane. We repeat this process recursively for the regions on the positive side of the bisecting hyperplane and for the negative side of the bisecting plane until we have regions of measure $\mu(\R^d)/d2^{x}$. The total number of hyperplanes used is $2^x - 1$ aside from the initial $d-1$ hyperplanes. Hence, we use a total of $(d-1) + 2^{x} - 1$ to partition the measure into $d2^{x}$ regions of equal measure.
\end{proof}

Because we are using a single hyperplane to bisect multiple regions at once, we can save many hyperplanes and obtain the following bound. We assume we have a fixed orthonormal basis $\{u_1, u_2, \ldots, u_d\}$. We call hyperplane vertical if it is orthogonal to $u_1$. We are now ready to prove Theorem \ref{thm:smallestkforn}.

\begin{proof}[Proof of Theorem \ref{thm:smallestkforn}]
We first prove the upper bound.  We can recursively define how we partition $\mu$ into $n$ regions of equal measure in the following way. Any positive integer $n$ can be expressed in the form $d2^{\alpha_1} + d2^{\alpha_2} + d2^{\alpha_3} + \ldots + \epsilon$ such that $\alpha_{j} > \alpha_{k}$ for $j < k$ and $\epsilon < d$. We can use a vertical hyperplane to partition $\mu$ such that the left side of the hyperplane has measure $(\mu(\R^d)d2^{\alpha_1})/n$. Using another vertical hyperplane to partition the right region we repeat this process so that the section between the two hyperplanes has measure $(\mu(\R^d)d2^{\alpha_2})/n$ and so on for each $\alpha_{i}$. 

We repeat this for at most ${\alpha_1} \leq \log_{2}(n/d)$ terms. Once the right-most side of the partition has measure less than $(\mu(\R^d)d)/n$, we use $\epsilon - 1$ parallel hyperplanes to partition the region into convex regions of equal measure. In each region with measure $(\mu(\R^d)d2^{\alpha_i})/n$, we can apply Lemma \ref{lem:d2xlemma} to partition that region into $d2^{\alpha_i}$ regions of equal measure using $(d-1) + 2^{\alpha_i} - 1$ hyperplanes. Therefore, we use at most $k \leq (d-1)\log_{2}(n/d) + (n/d - \epsilon/d) + \epsilon - 1$ hyperplanes and consequently $k = n/d + O(\log(n))$.

We can use a measure concentrated around the moment curve to obtain a lower bound for the complexity of any convex partition of $\rr^d$. A single hyperplane can intersect a moment curve at no more than $d$ points and contribute at most $d+1$ convex regions in which each piece of the measure can lie. Each subsequent hyperplane contributes at most an additional $d$ regions. This implies that, for a convex equipartition of $\mu$ into $n$ regions with complexity $k$, we have $n \leq kd + 1$. Therefore the lower bound for the complexity is $(n-1)/d \leq k$.
\end{proof}

An intuitive way to look at the proof above is that every time we use use Lemma \ref{lem:d2xlemma}, we get $d-1$ hyperplanes above the $n/d$ ideal bound.  This is done once for every $1$ in the binary representation of $\lfloor n/d \rfloor$.
For $d=2$ and $d=3$ we can find much simpler partitions that yield the following sharper bound.

\[
    \left \lceil \frac{n-1}{d} \right \rceil \leq k \leq  \frac{n}{d} + O(1) 
\]

\smallskip

The bounds largely solve the problem in those dimensions, as the term $O(1)$ is bounded by $1/2$ for $d=2$ and by $13/3$ for $d=3$.  We prove this by using a generalization of the ham sandwich theorem for ``well-separated'' families proved by B\'ar\'any, Hubard, and Jer\'onimo \cite{barany2008slicing}.  We say that a family $\mathcal{F}$ of subsets of $\rr^d$ is well-separated if for every $\mathcal{A}\subset\mathcal{F}$ we have that $\bigcup \mathcal{A}$ can be separated from $\bigcup (\mathcal{F}\setminus \mathcal{A})$ by a hyperplane.

\begin{theorem}[Bárány, Hubard, Jerónimo 2008]\label{thm:bhj}
Let $d$ be a positive integer.  For all $i=1,\ldots, d$, let $\mu_{i}$ be a finite measure on $\R^d$, absolutely continuous with respect to the Lebesgue measure, with support $K_{i}$ for all $i \in \{1, 2, \dots, d\}$. Assume the family $\mathcal{F} = \{K_{1},\dots,K_{d}\}$ is well-separated and let $\alpha = (\alpha_{1},\dots,\alpha_{d}) \in (0,1)^{d}$. Then there exists a half-space, $H$, such that $\mu_{i}(K_{i} \cap H) = \alpha_{i} \cdot \mu_{i}(K_{i})$, for every $i \in \{1, 2, \dots, d\}$.
\end{theorem}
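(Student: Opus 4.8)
The plan is to reduce the theorem to a zero-finding problem on the sphere of directions and then to settle that problem by continuation from the classical ham sandwich theorem, using well-separatedness only to prevent solutions from escaping to infinity. First I would parametrize oriented hyperplanes by their outer normal $u \in S^{d-1}$. For each $u$ and each $i$, define $d_i(u)$ to be the offset of the unique hyperplane with normal $u$ that captures exactly an $\alpha_i$-fraction of $\mu_i$ on the $+u$ side, so the relevant half-space is $\{x : \langle x,u\rangle \ge d_i(u)\}$. Uniqueness and continuity of $d_i$ follow from the standing hypotheses: since $\mu_i$ is absolutely continuous and positive on open subsets of its support, the one-dimensional marginal $t \mapsto \mu_i(\{x : \langle x,u\rangle \le t\})$ is continuous and strictly increasing across the range of $K_i$, so each level $\alpha_i \in (0,1)$ is attained at a single, continuously varying value $d_i(u)$. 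Finding the desired half-space is then exactly finding a direction $u$ with $d_1(u) = d_2(u) = \cdots = d_d(u)$, the common offset supplying the hyperplane.

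Next I would collect the offsets into $G(u) = (d_1(u),\ldots,d_d(u))$ and project modulo the diagonal $\langle \mathbf{1}\rangle \subset \rr^d$, producing a continuous map $\bar G : S^{d-1} \to \rr^{d-1}$ whose zeros are precisely the sought directions. The dimension count is favorable, as both the domain and the codimension equal $d-1$. The key structural observation is how $\bar G$ behaves under antipodes: replacing $u$ by $-u$ converts the $\alpha_i$-cut into the $(1-\alpha_i)$-cut together with a sign flip, so $\bar G$ is genuinely odd only in the balanced case $\alpha = (1/2,\ldots,1/2)$. That balanced case is exactly Theorem \ref{thm:hamsandwich}, where the Borsuk--Ulam theorem forces a zero that is moreover \emph{topologically essential}: it carries a nontrivial $\zz_2$-index and cannot be destroyed by a proper homotopy.

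I would then deform the parameter along a path in $(0,1)^d$ from $\alpha^{(0)} = (1/2,\ldots,1/2)$ to the target $\alpha$, obtaining a one-parameter family of maps $\bar G_s$ and a parametrized zero set. A standard continuation argument shows the essential zero present at $s=0$ persists for every $s$, \emph{provided} no zero escapes the effective configuration space, i.e.\ provided the solution direction and its offset stay inside a fixed compact region and the cutting hyperplane never degenerates. This no-escape bound is precisely what well-separatedness supplies: because every subfamily of $\{K_1,\ldots,K_d\}$ can be separated from its complement by a hyperplane, one obtains a uniform geometric obstruction preventing the simultaneously-cutting hyperplane from running off to infinity or from aligning with a direction along which the prescribed ratios become mutually incompatible. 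With the essential zero surviving to $s=1$, the corresponding direction $u$ and common offset produce the half-space $H$ with $\mu_i(K_i\cap H) = \alpha_i\,\mu_i(K_i)$ for all $i$.

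The hard part will be the no-escape estimate. The routine ingredients — continuity and uniqueness of the cuts, the dimension bookkeeping, and the Borsuk--Ulam input at the balanced parameter — follow directly from absolute continuity, but converting the purely combinatorial separability hypothesis into \emph{quantitative}, $s$-uniform bounds on the offsets $d_i(u)$ is the technical heart of the argument. The two delicate points are (i) showing that well-separatedness yields a priori bounds keeping all offsets in a fixed compact interval along the whole homotopy, and (ii) ruling out ``vertical escape,'' where the solution direction approaches one in which two of the target cuts cannot be realized simultaneously; both are where the well-separated hypothesis is genuinely used and cannot be dropped.
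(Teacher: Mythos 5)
First, note that the paper does not prove this statement: Theorem \ref{thm:bhj} is quoted as a tool from \cite{barany2008slicing}, so there is no in-paper proof to compare against; your proposal has to be judged on its own merits. On those merits, there is a genuine gap at the heart of your plan: the claim that the ham sandwich zero at $\alpha=(1/2,\ldots,1/2)$ is ``topologically essential'' and ``cannot be destroyed by a proper homotopy.'' For a map $\bar G \colon S^{d-1}\to\rr^{d-1}$, both the signed and the mod-$2$ counts of zeros are automatically zero: the relevant intersection number is the image of the fundamental class under $H_{d-1}(S^{d-1})\to H_{d-1}(\rr^{d-1}, \rr^{d-1}\setminus\{0\})$, and this factors through $H_{d-1}(\rr^{d-1})=0$. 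The Borsuk--Ulam zero in the balanced case is forced by $\zz_2$-\emph{equivariance}, and that symmetry is exactly what your deformation destroys --- replacing $u$ by $-u$ turns the $\alpha_i$-cut into the $(1-\alpha_i)$-cut, as you yourself observe. Once the homotopy passes through non-odd maps, there is no homotopy-stable index left, and the antipodal pair of zeros can simply annihilate inside a compact region. Your no-escape estimate, however carefully executed, cannot prevent this.

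A concrete example shows the failure is real and pinpoints where well-separatedness must actually enter. Take $d=2$ and two heavily overlapping (hence not well-separated) nearly identical measures $\mu_1\approx\mu_2$ with compact support. At $\alpha=(1/2,1/2)$ the ham sandwich theorem gives zeros of $\bar G_0$; at $\alpha=(0.9,\,0.1)$ no solution exists at all, since $\mu_1(H)\approx\mu_2(H)$ for every half-plane $H$. The offsets $d_i(u)$ stay in a fixed compact interval throughout (every $\alpha_i$-cut with $\alpha_i\in(0,1)$ meets the convex hull of the support), so the zeros do not escape to infinity or degenerate --- they annihilate. Hence well-separatedness cannot play the purely quantitative ``compactness'' role you assign it in your items (i) and (ii); it must be used to control the local structure of the zero set and rule out cancellation. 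This is indeed how B\'ar\'any, Hubard, and Jer\'onimo argue: they work at the fixed target $\alpha$ and use well-separation to pin down the signs of the offset differences $d_i(u)-d_j(u)$ on the boundary of a suitable region of directions (when the projection of $K_i$ lies entirely before that of $K_j$ along $u$, the sign is forced), and then a degree-theoretic boundary argument produces a zero inside. Your first two reduction steps (the offset functions $d_i$, the map $\bar G$ modulo the diagonal, and the identification of the balanced case with Theorem \ref{thm:hamsandwich}) are fine modulo a minor selection issue when a support is disconnected along $u$ (the level set of the marginal is then an interval and a continuous choice of $d_i(u)$ needs an argument), but the continuation mechanism itself must be replaced, not merely supplemented with estimates.
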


B\'ar\'any et al also determined conditions that guarantee the uniqueness of the hyperplane above, but we do not require it for our proof.  Using Theorem \ref{thm:bhj}, we introduce alternative partitions that yield better upper bounds than Theorem \ref{thm:smallestkforn} for $d=2$ and $d=3$.

\subsection{d=2}
Let $n = 2q + r$ for non-negative integers $q$ and $r < 2$. If $n < 2$, then $n=1$, so we use no hyperplanes. For $n \geq 2$, we first use $r$ vertical lines to partition the measure such that each of the $r$ left-most regions has measure $\mu(\R^2)/n$. Then we bisect the rightmost region with a line. The two regions induced are well-separated, so we can apply Theorem \ref{thm:bhj} to the pair of regions. We can use $q-1$ lines to partition the pair such that the positive half-space of each line has measure $\mu(\R^2)/q$ in each region of each pair. Therefore, for $n \geq 2$ the number of lines we use is 
\[
\frac{n}{2} + \frac{r}{2}.
\]

We refer to this partition as a B\'ar\'any, Hubard, and Jer\'onimo partition (see Figure \ref{fig:BHJdiagram}). Note that $r \leq 1$, so the complexity is bounded by $1/2$.

The second partition we introduce does not use Theorem \ref{thm:bhj} but follows immediately from Theorem \ref{thm:generalyao}. If the number of regions $n$ is odd, we use a vertical line to partition the measure so the left side has measure $\mu(\R^2)/n$ and then we take a multicenter equipartition for the right side of the line. In the case that $n$ is even, we can take a multicenter equipartition. The number of lines used in this partition is 
\[
k = \Bigl\lceil{\frac{n}{2}\Bigr\rceil}.
\]

We call these partitions \textit{modified multicenter partitions} (see Figure \ref{fig:multicenterR2diagram}). Note that this is the same bound proven by 
Rold\'an-Pensado and Sober\'on \cite{roldan2014extension}. The construction they provide is different because they use a rotating half-space to partition the measure (see Figure \ref{fig:RPSdiagram}). We refer to this partition as a Rold\'an-Pensado and Sober\'on partition. The differences between the partitions lies in their geometry. The modified multicenter partition iterates on the right regions, the Rold\'an-Pensado and Sober\'on partition uses rotating cuts, and the B\'ar\'any, Hubard, and Jer\'onimo partition preserves one cut in any direction and iteratively partitions the pair of regions.

\begin{figure}[ht]
     \centering
     \begin{subfigure}[b]{0.3\textwidth}
            \centering
            \includegraphics[width=\textwidth, height=2.5cm]{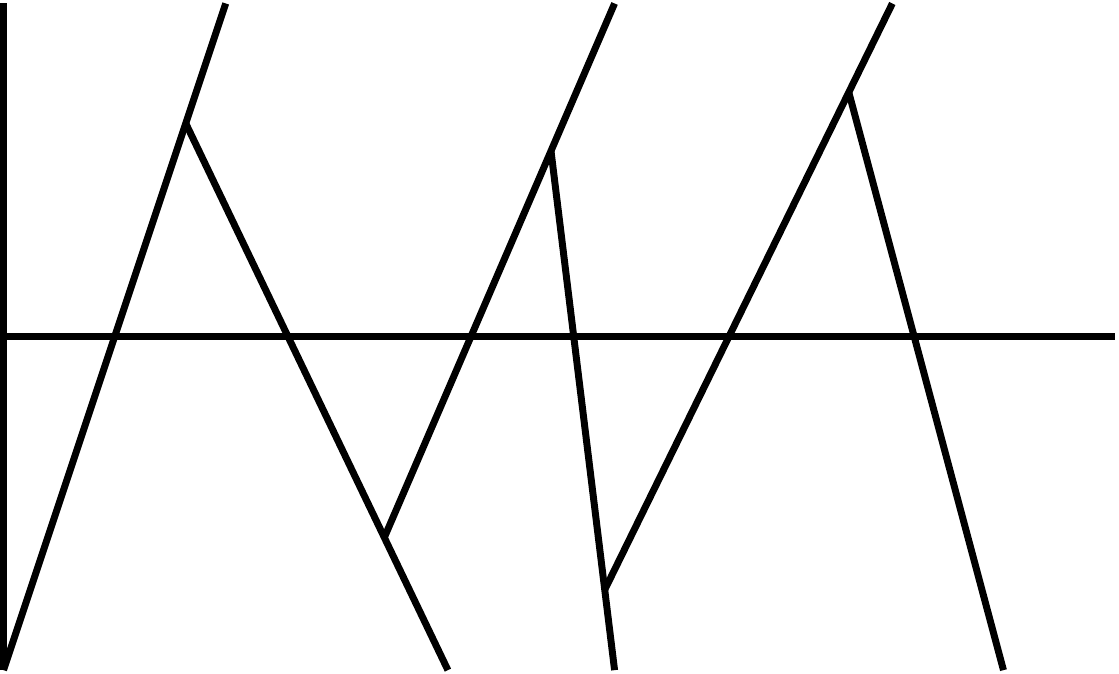}
            \caption{Modified multicenter partition in $\R^2$}
            \label{fig:multicenterR2diagram}
     \end{subfigure}
     \hfill
     \begin{subfigure}[b]{0.3\textwidth}
         \centering
            \includegraphics[width=\textwidth, height=2.5cm]{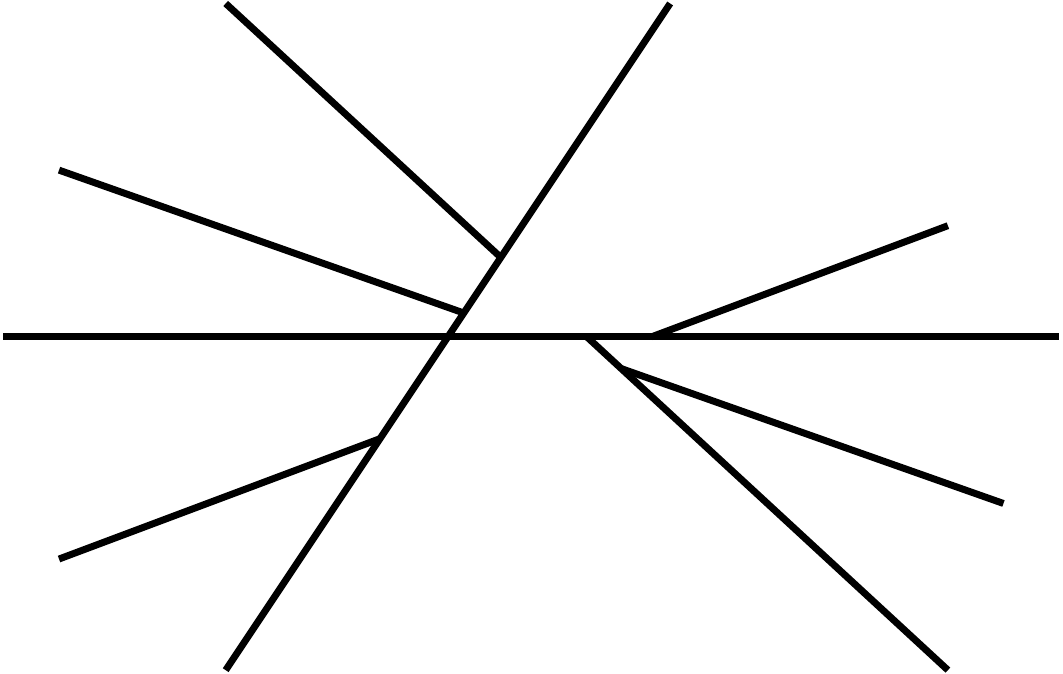}
            \caption{Rold\'an-Pensado and Sober\'on partition}
            \label{fig:RPSdiagram}
     \end{subfigure}
     \hfill
     \begin{subfigure}[b]{0.3\textwidth}
         \centering
         \includegraphics[width=\textwidth, height=2.5cm]{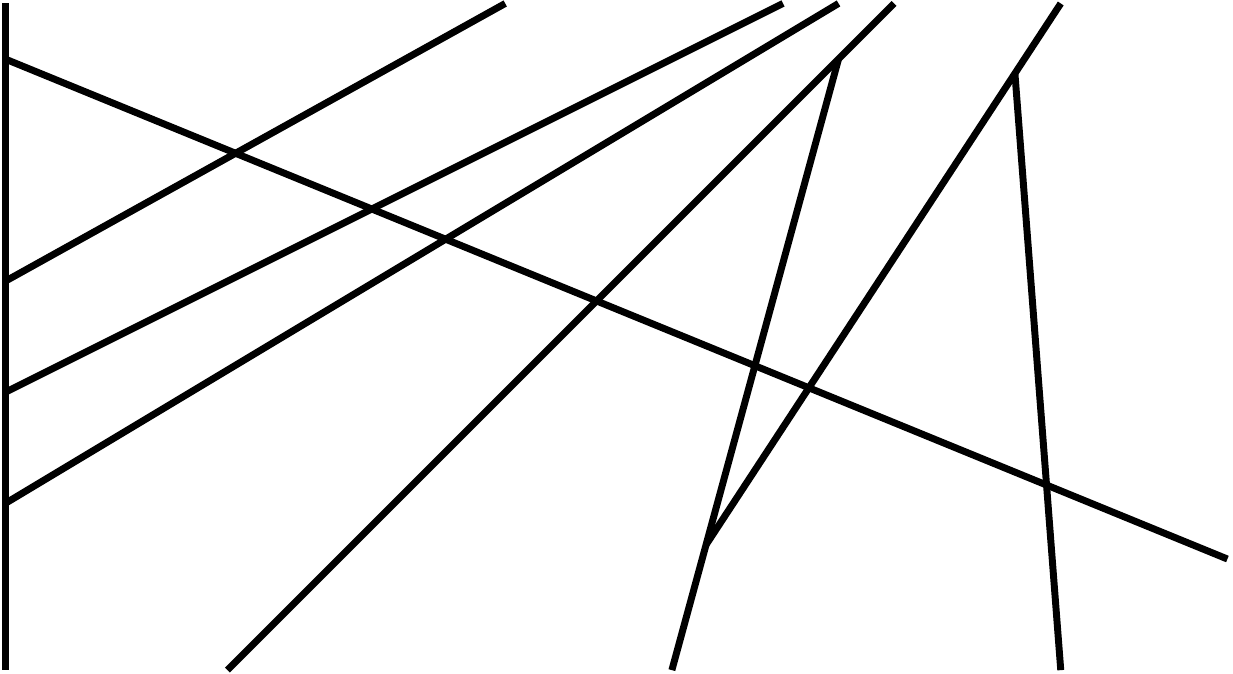}
            \caption{B\'ar\'any, Hubard, and Jer\'onimo partition}
            \label{fig:BHJdiagram}
     \end{subfigure}
        \caption{Each type of partition has its own geometry}
        \label{fig:d2diagrams}
\end{figure}

\subsection{d=3}

In the case of $d=3$, there are two other constructions that use Theorem \ref{thm:bhj} to show the same bound. Let $n = 6q + r$ for non-negative integers $q$ and $r < 6$. If $r > 0$, we use $r-1$ we first partition the measure using $r$ vertical planes so that each of the $r$ left-most regions has measure $\mu(\R^3)/n$.

For the first approach, we use a partition result of Buck and Buck, that says that \textit{any finite measure in the plane can be split into six equal parts using three concurrent lines} \cite{buck1949equipartition}.  Therefore, using a projection we can see that we can partition the rightmost region with three planes that share a common line of intersection such that each of the six resulting regions has equal measure.  Pairing every other region into a group of three gives two triplets, each consisting of well-separated regions. Therefore, by Theorem \ref{thm:bhj} we can iteratively use $q-1$ planes to partition each triplet such that the resulting regions have measure exactly $\mu(\R^3)/q$.  

In the second approach, we use two parallel planes to partition the right region into three regions of equal measure and then by the ham sandwich theorem we bisect all three regions using an additional plane. We group together the leftmost and rightmost region on the positive side of the bisecting plane and the center region on the negative side. The other triplet is formed by the remaining regions.

In both cases for $n\geq6$, the number of planes we use is
\[
\frac{n}{3} + \frac{2r}{3} + 1.
\]

\smallskip

Note that $r \leq 5$, so the complexity is bounded by $(n+13)/3$. Therefore, tools such as Theorem \ref{thm:bhj} allow us to find partitions with smaller complexity and different geometric properties.

\section{Stiefel manifolds and Yao--Yao partitions}\label{sect:stiefelyao}

The Borsuk--Ulam theorem is the topological backbone of many mass partition results.  The Yao--Yao theorem is one of such result, as the inductive step that allows us to find Yao--Yao partitions relies on the Borsuk--Ulam theorem.  As one explores more elaborate mass partition theorems, we may either require more advanced topological machinery or tailor-made topological results similar to the Borsuk-Ulam theorem.  In this section we first present a new proof a Borsuk-Ulam type theorem from Chan et al \cite{chan2020borsuk} where the domain is a Stiefel manifold of orthonormal $k$-frames in $\rr^d$
\[
V_k(\rr^d) = \{(v_1, \ldots, v_k): v_i \in S^{d-1} \mbox{ for }i=1,\ldots, k, \ \langle v_i, v_j\rangle = 0 \ \mbox{ for } i \neq j\}.
\]
The Stiefel manifold $V_k(\rr^d)$ is a manifold of dimension $(d-1) + (d-2) + \ldots + (d-k)$ with a free action of $(\zz_2)^k = \{+1,-1\}^k$.  Given an element $\lambda = (\lambda_1, \ldots, \lambda_k) \in (\zz_2)^k$ and $v=(v_1, \ldots, v_k) \in V_k(\rr^d)$, we define
\[
\lambda v := (\lambda_1 v_1, \ldots, \lambda_k v_k) \in V_k(\rr^d).
\]
We can also define an action of $(\zz_2)^k$ on $R=\rr^{d-1} \times \rr^{d-2} \times \ldots \times \rr^{d-k}$ as the direct product of the action of $\zz_2$ on each component.  The only fixed point in $R$ is the zero vector.

\begin{theorem}[Chan, Chen, Frick, Hull 2020 \cite{chan2020borsuk}]\label{thm:stiefel-BU}
	Let $k, d$ be positive integers such that $k \le d$.  Let $f:V_k(\rr^d) \to \rr^{d-1} \times \rr^{d-2} \times \ldots \times \rr^{d-k}$ be a continuous and equivariant function with respect to the action of $(\zz_2)^k$ on each space as defined above.  Then, there exists $v \in V_k(\rr^d)$ such that $f(v) = 0$.
\end{theorem}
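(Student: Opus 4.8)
The plan is to recast the statement as a parity count for the zero set of an equivariant map and to compare the given $f$ with an explicit ``reference'' equivariant map through a straight-line homotopy. The starting observation is the dimension coincidence
\[
\dim V_k(\rr^d) = (d-1)+(d-2)+\cdots+(d-k) = \dim\left(\rr^{d-1}\times\rr^{d-2}\times\cdots\times\rr^{d-k}\right)=:N,
\]
where the left equality is the one recorded in the excerpt. Thus a transverse equivariant map has a zero set of dimension $0$ (a finite union of free $(\zz_2)^k$-orbits), and a one-parameter family of such maps has a zero set of dimension $1$. Write $R=\rr^{d-1}\times\cdots\times\rr^{d-k}$. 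Since the $(\zz_2)^k$-action on $R$ is linear and the action on $V_k(\rr^d)$ is free, the continuous equivariant maps $V_k(\rr^d)\to R$ form a real vector space, so any affine combination of two equivariant maps is again equivariant; this is what makes the homotopy below equivariant.

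First I would fix the standard basis $e_1,\dots,e_d$ of $\rr^d$ and introduce the reference map $g=(g_1,\dots,g_k)\colon V_k(\rr^d)\to R$ defined by
\[
g_i(v_1,\dots,v_k)=\bigl(\langle v_i,e_1\rangle,\ \langle v_i,e_2\rangle,\ \dots,\ \langle v_i,e_{d-i}\rangle\bigr)\in\rr^{d-i}.
\]
Negating $v_i$ negates $g_i$ and leaves every $g_j$ with $j\neq i$ unchanged, so $g$ is equivariant. A short computation shows that $g(v)=0$ forces $v_i\perp e_1,\dots,e_{d-i}$ for every $i$; together with orthonormality this pins down $v_i=\pm e_{d-i+1}$, so $g^{-1}(0)$ is the single free $(\zz_2)^k$-orbit $\{(\varepsilon_1 e_d,\dots,\varepsilon_k e_{d-k+1}):\varepsilon\in\{\pm1\}^k\}$ of $2^k$ points. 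I would then check, by a routine Jacobian computation on the tangent space of $V_k(\rr^d)$, that $g$ is transverse to $0$ at each of these points, so that the number of zero-orbits of $g$ is exactly one; only the parity of this number will matter.

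Assuming for contradiction that the given equivariant $f$ has no zero, compactness of $V_k(\rr^d)$ gives $\|f\|\geq \epsilon>0$. I would form the equivariant homotopy
\[
H\colon V_k(\rr^d)\times[0,1]\to R,\qquad H(v,t)=(1-t)f(v)+t\,g(v),
\]
with $H_0=f$ (no zeros) and $H_1=g$ (one zero-orbit). Because the $(\zz_2)^k$-action on $V_k(\rr^d)\times[0,1]$ is free, equivariant transversality there is just ordinary transversality on the quotient; so after an arbitrarily small equivariant perturbation rel the ends --- which preserves both the zero-freeness of $H_0$ and the transverse single orbit of $H_1$ --- the zero set $\mathcal Z=H^{-1}(0)$ is a compact $(\zz_2)^k$-invariant $1$-manifold with boundary contained in $V_k(\rr^d)\times\{0,1\}$. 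Since the action is free on $\mathcal Z$, the quotient $\mathcal Z/(\zz_2)^k$ is a compact $1$-manifold with boundary, and its boundary is the disjoint union of the zero-orbits of $H_0$ and of $H_1$ modulo the action, namely $\varnothing\sqcup\{\text{one point}\}$. A compact $1$-manifold has an even number of boundary points, contradicting the count of exactly one; hence $f$ must vanish somewhere.

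The base case is consistent: for $k=1$ this is precisely the antipodal Borsuk--Ulam statement for $f\colon S^{d-1}\to\rr^{d-1}$, recovered by the same count. The step I expect to be the main obstacle is the transversality bookkeeping in the last paragraph: one must arrange $H$ to be equivariantly transverse to $0$ while keeping the two end maps fixed, and must be sure that the reference map $g$ genuinely contributes an odd number of zero-orbits, i.e.\ that its $2^k$ zeros are nondegenerate (equivalently, that the local mod-$2$ degree along the orbit is $1$). Everything else --- equivariance of the homotopy, freeness of the action, and the even parity of the boundary of a compact $1$-manifold --- is formal once the dimension count $\dim V_k(\rr^d)=\dim R$ is in hand.
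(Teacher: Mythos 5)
Your proposal is correct and follows essentially the same route as the paper's proof (B\'ar\'any's scheme): an explicit equivariant reference map $g$ with a single transverse zero orbit, the straight-line equivariant homotopy between $f$ and $g$, a small equivariant perturbation making $0$ a regular value via transversality, and a mod-$2$ count of boundary orbits of the resulting compact $1$-manifold of zeros. The only cosmetic differences are that your $g$ records the first $d-i$ coordinates of $v_i$ (a mirrored version of the paper's map, whose zero orbit is $v_i=\pm e_{d-i+1}$ rather than $v_j=\pm e_j$), and that you argue by contradiction with a perturbation rel both ends, whereas the paper fixes only the $g$-end of the homotopy and extracts a zero of $f$ by letting the perturbation size $\varepsilon\to 0$ and invoking compactness of $V_k(\rr^d)$.
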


The case $k=1$ is one of the many equivalent forms of the Borsuk-Ulam theorem: \textit{every continuous odd map $f:S^{d-1}\to \rr^{d-1}$ has a zero.}  The case $k=d-1$ is essentially a Borsuk-Ulam type theorem in which the domain is $O(d)$.  The original proof of this theorem involves the computation a topological invariant constructed by the sum of the degrees of some associated continuous maps.  It can also be proved using the Fadell--Husseini index (this proof is implicit in Fadell and Husseini's foundational paper \cite{Fadell:1988tm}).  Similar results, where the domain is $O(d)$ or $SO(d)$, have been proven earlier, although they use slightly different group actions \cites{roldan2014extension, FHMRA19}.  Since the dimension of the domain and the range is the same, the theorem above is also a consequence of Musin's Borsuk--Ulam type theorems for manifolds \cite{Mus12}*{Theorem 1}.  We present a simple proof below for completeness.  The proof below, as well as Musin's proof for this main theorem and the proofs for the results mentioned for $SO(d)$ all follow the scheme of Imre B\'ar\'any's proof of the Borsuk--Ulam theorem \cites{barany1980borsuk, matousek2003using}.

\begin{proof}
First, we construct a particular map $g:V_k(\rr^d) \to R$ that is continuous and equivariant.  We denote the coordinates of each $v_i$ by $v_i = ((v_{i})_1, \ldots, (v_{i})_d)$.  The function $g$ is defined by
\begin{align*}
g:V_k(\rr^d) &\to \rr^{d-1}\times \rr^{d-2} \times \ldots \times \rr^{d-k} \\ 
(v_1, \ldots, v_k) &\mapsto (x_1, \ldots, x_k)	
\end{align*}
where
\[
x_j = \begin{bmatrix}
	(v_j)_{j+1} \\
	(v_j)_{j+2} \\
	\vdots \\
	(v_j)_{d}
\end{bmatrix} \in \rr^{d-j}.
\]
A simple inductive argument shows that the only zeroes of this function are when for each $1\le j \le k$ we have $v_j \in \{e_j, -e_j\}$, where $(e_1,\ldots, e_d)$ is the canonical basis of $\rr^d$.  In other words, there is a single $\left(\zz_2 \right)^k$-orbit of zeros in $V_k(\rr^d)$, which is the set $\left(\zz_2 \right)^k(e_1,\ldots, e_k)$.

Now we construct a new map
\begin{align*}
	T: V_k(\rr^d) \times [0,1] &\to \rr^{d-1}\times \rr^{d-2} \times \ldots \times \rr^{d-k} \\
	(v,t) & \mapsto tf(v) + (1-t)g(v).
\end{align*}

Let $\varepsilon>0$ be a real number. There exists a smooth $(\zz_2)^k$-equivariant map $T_{\varepsilon}: V_k(\rr^d) \times [0,1] \to \rr^{d-1}\times \rr^{d-2} \times \ldots \times \rr^{d-k}$ such that
\begin{align*}
T_{\varepsilon}(v,0) = T(v,0) = g(v) & \qquad \mbox{ for all }v\in V_k(\rr^d), \\
||T_{\varepsilon}(v,t)-T(v,t)||< \varepsilon & 	\qquad \mbox{ for all }v\in V_k(\rr^d),\ t \in [0,1], \mbox{ and} \\
0\mbox{ is a regular value of }T_{\varepsilon}.
\end{align*}

This follows from Thom's transversality theorem \citelist{\cite{thom1954quelques} \cite{guillemin2010differential}*{pp 68-69}}.  Let us look at $T_{\varepsilon}^{-1}(0) \subset V_k(\rr^d)\times [0,1]$.  This is a one-dimensional manifold with a free action of $\left( \zz_2\right)^k$, whose connected components are diffeomorophic to circles or to intervals.  The components with diffeomorphic to intervals must have their endpoints in $V_k(\rr^d) \times \{0,1\}$.  As any continuous function from a closed interval to itself must have a fixed point, the group $(\zz_2)^k$ acts freely on the set of intervals in $T^{-1}_{\varepsilon}(0)$.

The set $T^{-1}_{\varepsilon}(0) \cap \left(V_k(\rr^d)\times \{0\}\right)$ has a single orbit of $\left( \zz_2\right)^k$ by construction.  Therefore, $T^{-1}_{\varepsilon}(0) \cap \left(V_k(\rr^d)\times \{1\}\right)$ must have an odd number of orbit of $\left( \zz_2\right)^k$, which implies that $T^{-1}_{\varepsilon}(0) \cap \left(V_k(\rr^d)\times \{1\}\right)$ is not empty.  As we make $\varepsilon \to 0$, the compactness of $V_k(\rr^d)$ implies that $f^{-1}(0)$ is not empty, as we wanted.
\end{proof}

A consequence of Theorem \ref{thm:stiefel-BU} is a generalization of the central transversal theorem mentioned in the introduction \cites{dol1992generalization, zivaljevic1990extension}.   Given a finite probability measure $\mu$ in $\rr^d$ and an affine subspace $L$ of dimension $k$, we say that $L$ is a central $k$-transversal to $\mu$ if each half-space that contains $L$ has measure at least $1/(d-k+1)$ in $\mu$.  The central transversal theorem says that \textit{for $0 \le \lambda \le d-1$, any $\lambda+1$ measures in $\rr^d$ have a common central $\lambda$-transversal}.  The case $\lambda=d-1$ is the ham sandwich theorem and the case $\lambda=0$ is the centerpoint theorem.  We obtain the following result.

\begin{theorem}\label{thm:stiefel-transversal}
	Let $d$ be a positive integer, $\mu_1, \ldots, \mu_d$ be finite absolutely continuous measures in $\rr^d$, and $\lambda$ be an integer such that $0 \le \lambda \le d-1$.  We can find affine subspaces $L_{\lambda} \subset L_{\lambda+1} \subset \ldots \subset L_{d-1}$ such that
	\begin{itemize}
		\item for each $\lambda \le i \le d-1$, the dimension of $L_i$ is $i$,
		\item the subspace $L_{\lambda}$ is a central $\lambda$-transversal to each of $\mu_1, \ldots, \mu_{\lambda+1}$, and
		\item for each $\lambda < i \le d-1$, the subspace $L_i$ is a central $i$-transversal to $\mu_{i+1}$.
	\end{itemize}
\end{theorem}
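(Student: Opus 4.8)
Set $k=d-\lambda$. The plan is to realize the desired flag as the flag associated to a zero of a continuous $(\zz_2)^k$-equivariant map
\[
f\colon V_k(\rr^d)\longrightarrow \rr^{d-1}\times \rr^{d-2}\times\cdots\times \rr^{d-k},
\]
and then to invoke Theorem \ref{thm:stiefel-BU}. A first sanity check is that $\dim V_k(\rr^d)=(d-1)+(d-2)+\cdots+(d-k)$ equals the dimension of the target, so this is the natural configuration space.

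First I would fix the dictionary between frames and flags. Given $(v_1,\ldots,v_k)\in V_k(\rr^d)$, put $W_j=\operatorname{span}\{v_1,\ldots,v_j\}$ and declare the subspace $L_{d-j}$ (of dimension $d-j$) to have direction space $W_j^{\perp}$; thus $v_1$ is the normal of the hyperplane $L_{d-1}$ and each subsequent $v_j$ is the normal of the cut that carves $L_{d-j}$ out of $L_{d-j+1}$. The positions of the subspaces I would pin down recursively by centering conditions, in the spirit of the recursive Yao--Yao construction: $L_{d-1}$ is the halving hyperplane of $\mu_d$ orthogonal to $v_1$, and each lower subspace is located so that the two slices it separates have aligned centers. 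The key reformulation I would use throughout is the projection characterization of central transversals: an $i$-dimensional affine subspace $L$ is a central $i$-transversal to a measure $\nu$ if and only if the orthogonal projection of $L$ onto the orthogonal complement of its direction space (a single point in $\rr^{d-i}$) is a centerpoint of the correspondingly projected measure, i.e.\ has Tukey depth at least $1/(d-i+1)$.

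Next I would define the components $f_j\in\rr^{d-j}\cong W_j^{\perp}$. Having fixed $v_1,\ldots,v_j$, I cut the relevant measure by the hyperplane orthogonal to $v_j$ through the current center, split it into its two half-measures, project their centers onto $W_j^{\perp}$, and let $f_j$ be the resulting oriented difference-of-centers vector; this is precisely the Yao--Yao obstruction, now living in the $(d-j)$-dimensional space orthogonal to all previous cuts. The routing of measures follows the statement: for $j=1,\ldots,k-1$ the level-$j$ obstruction uses the single measure $\mu_{d-j+1}$ and forces $L_{d-j}$ to be a central $(d-j)$-transversal to it, while the bottom level $j=k$ uses $\mu_1,\ldots,\mu_{\lambda+1}$ jointly and is responsible for making $L_\lambda$ a common central $\lambda$-transversal. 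Equivariance is then geometric: replacing $v_j$ by $-v_j$ interchanges the two half-spaces of the $j$-th cut, negating $f_j$, while the underlying hyperplanes of all the other cuts are unchanged and hence the remaining components are unaffected, so $f(\lambda v)=\lambda f(v)$ for $\lambda\in(\zz_2)^k$. Granting continuity, Theorem \ref{thm:stiefel-BU} produces a frame with $f=0$, and the projection characterization converts the vanishing of each $f_j$ into the required central-transversal property of $L_{d-j}$; the base case $\lambda=d-1$ (a single bisecting hyperplane for $d$ measures) recovers the ham sandwich theorem, i.e.\ the $k=1$ case of Theorem \ref{thm:stiefel-BU}.

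The hard part will be twofold. First, one must make the recursive assignment of positions and the difference-of-centers vectors genuinely continuous on all of $V_k(\rr^d)$, including the degenerate frames where a cut becomes parallel to an earlier slice; here I expect to reuse the uniqueness and monotonicity facts established for $(\alpha,\beta)$-partitions, in particular that the relevant centers move monotonically, so that each center is well defined and varies continuously. Second, and more serious, is upgrading the \emph{equality} $f=0$ to the \emph{inequality} defining a central transversal, at the correct threshold $1/(d-i+1)$, simultaneously at every level of the flag. This is delicate precisely at the bottom level, where a single balance vector in $\rr^{\lambda}$ must certify a common central $\lambda$-transversal for $\lambda+1$ measures; I would handle it by combining the vanishing of $f_k$ with the centerpoint theorem applied to the projected measures in $W_k\cong\rr^{k}$, using that the extra rotational freedom of the frame (beyond what a naive dimension count allocates) is exactly what converts ``balanced'' into ``deep enough for all $\lambda+1$ measures at once.''
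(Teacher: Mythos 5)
Your high-level skeleton --- parametrize by $V_k(\rr^d)$ with $k=d-\lambda$, build a $(\zz_2)^k$-equivariant map into $\rr^{d-1}\times\cdots\times\rr^{d-k}$, invoke Theorem \ref{thm:stiefel-BU}, and translate via the projection characterization of central transversals --- is exactly the paper's. But the map you construct is not, and the two difficulties you defer as ``the hard part'' are genuine gaps that your construction cannot close. The decisive one is your routing of conditions. Making $L_\lambda$ a common central $\lambda$-transversal to $\mu_1,\ldots,\mu_{\lambda+1}$ amounts to forcing $\lambda+1$ centerpoints, all living in $W_k\cong\rr^k$, to coincide: that is $\lambda k$ scalar equations. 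You allocate to this only the bottom component $f_k\in\rr^{\lambda}$. The mismatch is already fatal in the case $\lambda=0$, $k=d$ (Corollary \ref{cor:flag}): there $f_k$ lies in $\rr^{0}$ and carries no information at all, yet it is ``responsible'' for $L_0$ being a centerpoint of $\mu_1$; since your flag is pinned down top-down by halving and center-alignment conditions on $\mu_d,\ldots,\mu_2$ alone, nothing ever pushes $L_0$ into the depth-$1/(d+1)$ region of $\mu_1$. Concretely in $\rr^2$: a halving line of $\mu_2$ whose direction is fixed by your Yao--Yao alignment condition on $\mu_2$ need not meet the centerpoint region of $\mu_1$.

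The paper's proof dissolves both of your ``hard parts'' at once by reversing the anchoring, so that topology is never asked to produce an inequality. For each frame $(v_1,\ldots,v_k)$ it defines $p_{i+1}$ to be a centerpoint of the projection of $\mu_{i+1}$ onto $\operatorname{span}\{v_1,\ldots,v_k\}$ (for $i\le\lambda$) or onto $\operatorname{span}\{v_1,\ldots,v_{d-i}\}$ (for $i>\lambda$), made canonical and continuous by taking the barycenter of the compact convex set of all centerpoints; this choice depends only on the spans, hence is invariant under sign flips, which gives equivariance. The depth inequalities at the threshold $1/(d-i+1)$ then hold by construction (centerpoint theorem), and the map only records alignment failures: $x_j=\bigl(\langle v_j,\,p_i-p_1\rangle\bigr)_{i=2,\ldots,d+1-j}\in\rr^{d-j}$, so every level mixes common-transversal and flag equations, and the count $\sum_{j=1}^{k}(d-j)=\lambda k+\tfrac{k(k-1)}{2}$ exactly matches the $\lambda k$ coincidence equations plus the $\tfrac{k(k-1)}{2}$ flag equations. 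A zero gives $p_1=\cdots=p_{\lambda+1}$ and $\Pi_{d-i}(p_1)=p_{i+1}$ for $i>\lambda$, and the flag $L_i=\Pi_{d-i}^{-1}\bigl(\Pi_{d-i}(p_1)\bigr)$ inherits all transversality from the centerpoint property of the $p$'s. By contrast, a vanishing difference-of-centers of two half-measures is only an equality: if the centers are Yao--Yao-type centers, a hyperplane through them guarantees depth only $2^{-(d-j)}$, which is strictly below the required $1/(d-j+1)$ once $d-j\ge 2$, and Tukey centerpoints are not produced by any halving recursion. So your second ``hard part'' is not a technicality to be smoothed out; it is the missing proof, and the fix is to build the centerpoints in before the topology, not after.
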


The second condition implies the central transversal theorem.  The last condition can also be obtained from the central transversal theorem by a bootstrapping argument.  We first find a common $\lambda$-transversal to the first $\lambda+1$ measures and then use the central transversal theorem to iteratively look for a direction to extend $L_n$ to $L_{n+1}$ for $\lambda \le n < d-1$.  This version is slightly closer to the ham sandwich theorem than the central transversal theorem, as we always deal with $d$ measures.  We use Theorem \ref{thm:stiefel-BU} to obtain a direct proof.

\begin{proof}
	Let $k=d-\lambda$.  We construct a function $f:V_k(\rr^d) \to R$ as in Theorem \ref{thm:stiefel-BU}.  Let $(v_1, \ldots, v_k)$ be an element of $V_k(\rr^d)$.  For $0 \le i \le \lambda=d-k$, let $\sigma_{i+1}$ be the orthogonal projection of $\mu_{i+1}$ onto $V_{k} = \operatorname{span}\{v_1, \ldots, v_k\}$.  By the central point theorem, there is a centerpoint $p_{i+1}$ of $\sigma_{i+1}$ in $V_k$.  The set of all possible centerpoints of $\sigma_i$ is a compact convex set, so we can choose $p_{i+1}$ to be the barycenter of this set.  If $\Pi_k : \rr^d \to V_k$ is the orthogonal projection, the affine subspace $\Pi_k^{-1}(p_{i+1})$ is a central $\lambda$-transversal for $\mu_{i+1}$.
	
	For $\lambda < i \le d-1$, let $\sigma_{i+1}$ be the orthogonal projection of $\mu_{i+1}$ onto $V_{d-i}=\operatorname{span} \{ v_1, \ldots, v_{d-i}\}$.  Let $p_{i+1}$ be the centerpoint of $\sigma_i$ in $V_{d-i}$ chosen as above. We denote by $\Pi_{d-i}: \rr^d \to V_{d-i}$ the orthogonal projection onto $V_{d-i}$.  The affine subspace $\Pi^{-1}_{d-i}(p_{i+1})$ is a central $i$-transversal for $\mu_{i+1}$.
	Now we define
	\begin{align*}
		f:V_k(\rr^d) &\to \rr^{d-1} \times \rr^{d-2} \times \ldots \rr^{d-k} \\
		(v_1, \ldots, v_k) &\mapsto (x_1, \ldots, x_k)
	\end{align*}
	where $x_j \in \rr^{d-j}$ is defined by 
	\[
	x_j = \begin{bmatrix}
		\langle v_j , p_2 - p_1 \rangle \\
		\langle v_j , p_3 - p_1 \rangle \\
		\vdots \\
		\langle v_j , p_{d+1-j}-p_1\rangle
	\end{bmatrix}.
	\]
	This map is continuous and equivariant, so it must have a zero.  If $(v_1, \ldots, v_k)$ is a zero of this map, let us show that the subspaces $L_i = \Pi_{d-i}^{-1}\left(\Pi_{d-i}(p_1)\right)$ for $\lambda \le i \le d-1$ satisfy the condition we want.  We immediately get that the dimension of $L_i$ is equal to $i$ and that $L_\lambda \subset \ldots \subset L_{d-1}$.
	
	First, since $(v_1, \ldots, v_k)$ is an orthonormal frame and $p_{i+1} \in V_k$ for $i=0,\ldots, \lambda$, we know that $p_{i+1} = \sum_{j=1}^k \langle v_j, p_{i+1} \rangle v_j$.  For $i \ge \lambda+1$, we have $p_{i+1} = \sum_{j=1}^{d-i}\langle v_j, p_{i+1} \rangle v_j$.
	
	If $x_j = 0$ for all $j$, this implies that $p_1 = p_i$ for $i=2,\ldots, \lambda+1$.  Therefore, $L_{\lambda}$ is a central $\lambda$-transversal for each of $\mu_1, \ldots, \mu_{d+1}$.  For $i>\lambda+1$ we have
	\[
	\Pi_{d-i}(p_1) = \sum_{j=1}^{d-i}\langle v_j, p_1 \rangle v_j = \sum_{j=1}^{d-i}\langle v_j, p_{i+1} \rangle v_j = p_{i+1}
	\].
	Therefore, $L_i$ is a central $i$-transversal to $\mu_{i+1}$.
\end{proof}

If we take $\lambda = 0$ we have the following corollary about full flags of subspaces.

\begin{corollary}\label{cor:flag}
	Let $\mu_1, \ldots, \mu_d$ be finite measures in $\rr^d$.  We can find affine subspaces $L_{0} \subset L_{1} \subset \ldots \subset L_{d-1}$ such that for each $0 \le i \le d-1$, the dimension of $L_i$ is $i$ and  for each $0 \le i \le d-1$ the subspace $L_i$ is a central $i$-transversal to $\mu_{i+1}$.
\end{corollary}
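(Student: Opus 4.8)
The plan is to obtain this statement as an immediate specialization of Theorem \ref{thm:stiefel-transversal} with the parameter $\lambda$ set to $0$. Since no new topological input is needed beyond what that theorem already provides, the only work is to check that the three bullet points of Theorem \ref{thm:stiefel-transversal}, when $\lambda = 0$, collapse precisely into the conclusion asked for here.

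Concretely, I would apply Theorem \ref{thm:stiefel-transversal} to the measures $\mu_1, \ldots, \mu_d$ with $\lambda = 0$. This produces affine subspaces $L_0 \subset L_1 \subset \ldots \subset L_{d-1}$ with $\dim L_i = i$ for every $0 \le i \le d-1$, which is exactly the flag and dimension requirement of the corollary. The second bullet of the theorem, specialized to $\lambda = 0$, asserts that $L_0$ is a central $0$-transversal to each of $\mu_1, \ldots, \mu_{\lambda+1} = \mu_1$; that is, $L_0$ is a central $0$-transversal to $\mu_1$. (Here a central $0$-transversal is simply a point $p$ such that every closed half-space containing $p$ has measure at least $1/(d+1)$, i.e.\ a centerpoint of $\mu_1$, matching the $i=0$ case of the desired conclusion.) The third bullet, again with $\lambda = 0$, states that for each $0 < i \le d-1$ the subspace $L_i$ is a central $i$-transversal to $\mu_{i+1}$.

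Combining the $i = 0$ statement from the second bullet with the $1 \le i \le d-1$ statements from the third bullet, I would conclude that for every $0 \le i \le d-1$ the subspace $L_i$ is a central $i$-transversal to $\mu_{i+1}$, which is precisely the claim. There is no genuine obstacle in this argument: the entire content has already been established in Theorem \ref{thm:stiefel-transversal}, whose proof rests on the Borsuk--Ulam type result Theorem \ref{thm:stiefel-BU}. The only point requiring a moment's care is the bookkeeping of the index ranges, namely verifying that the edge case $i = 0$ is covered by the second bullet while all larger $i$ are covered by the third, so that together they yield the uniform range $0 \le i \le d-1$ without gap or overlap.
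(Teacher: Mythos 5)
Your proposal is correct and is exactly the paper's own argument: the paper states Corollary \ref{cor:flag} as the immediate specialization $\lambda = 0$ of Theorem \ref{thm:stiefel-transversal}, with precisely the index bookkeeping you describe (the second bullet covering $i=0$ for $\mu_1$, the third covering $0 < i \le d-1$). No further comparison is needed.
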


The fact that our parametrization depends on $V_k(\rr^d)$ instead of a Grassman manifold means that in the proof above we may replace $p_i$ in the proof above by a point that depends continuously on the choice of $V_k(\rr^d)$, as long as the choice keeps our function equivariant.  For example, we could take $p_i$ to be a Yao--Yao center of $\sigma_{i+1}$ on the corresponding subspace with the basis induced by $(v_1,\ldots, v_k)$.  If we do this for several measures, even though the projections of the Yao--Yao centers will coincide we have no guarantee that the projections of the whole Yao--Yao partitions will coincide.  We can now prove Theorem \ref{thm:two-hyperplanes}.

\begin{proof}[Proof of Theorem \ref{thm:two-hyperplanes}]
The proof is almost identical to the proof of Theorem \ref{thm:stiefel-transversal} with $k=2$ except for the definition of $p_1$.

We define $p_1$ as the Yao--Yao center of $\Pi_2(\mu_1)$ induced by the basis $(v_2, v_1)$.  Notice that a Yao--Yao partition in $\rr^2$ is an equipartition by two lines, one of which is parallel to $v_2$ if we choose the ordered basis as above.  The inverse image under $\Pi_2$ of the two lines forming this Yao--Yao partition give us the hyperplanes we wanted.
\end{proof}

Let us compare Theorem \ref{thm:two-hyperplanes} with earlier results.  For $d=3$ the central transversal theorem tells us that for any two measures we can find a common central line.  A classic result of Hadwiger tells us that for any two measures in $\rr^3$ there are two planes that simultaneously split them into four equal parts \cite{Hadwiger1966} (a proof with new methods was recently found by Blagojevi\'c, Frick, Haase, and Ziegler \cite{Blagojevic:2018jc}*{Section 4}).

The case $d=3$ of Theorem \ref{thm:two-hyperplanes} shows that for any two measure in $\rr^3$ we can find two planes that split the first measure into four equal parts and whose intersection is a central line for the second measure.  We even have a degree of freedom since we can choose $\mu_3$ at will.  Even the case $\mu_1 = \mu_2$ is non-trivial, as opposed to the previous two related results.

If we are given more than $2d/3$ measures in $\rr^d$ it is possible that there is no pair of hyperplanes that split each of them into four equal parts \cite{Ramos:1996dm}.  Therefore the corollary above is a sensible way of interpolating between central transversals and equipartitions by hyperplanes of many measures.  It is not clear if we can make the equipartition part of Theorem \ref{thm:two-hyperplanes} be used for more measures.

\begin{question}
	Is is true that for any $d-1$ measures $\mu_1, \ldots, \mu_{d-1}$ in $\rr^d$ there exist two hyperplanes $H_1, H_2$ such that
	\begin{itemize}
		\item $H_1 \cup H_2$ split each of $\mu_1, \mu_2$ into four equal parts and
		\item $H_1 \cap H_2$ is a central $(d-2)$ transversal to each of $\mu_3, \ldots, \mu_{d-1}$?
	\end{itemize}
\end{question}

We can also extend Corollary \ref{cor:flag} in a similar way but now using a full Yao--Yao partition.

\begin{theorem}\label{thm:yao-transversal}
Let $d$ be a positive integer and $\mu_1, \ldots, \mu_d$ be finite absolutely continuous measures in $\rr^d$.  For an orthonormal basis $(u_1, \ldots, u_d)$ of $\rr^d$ let $C(\mu_1)$ be the center of the Yao--Yao partition of $\mu_1$ induced by the basis $(u_1, \ldots, u_d)$, and let $L_i$ be the translate of $\operatorname{span}\{u_1,\ldots, u_i\}$ through $C(\mu_1)$.  Then, there exists a choice of an orthonormal basis such that for $i=1,\ldots,d-1$ we have that $L_i$ is a central $i$-transversal of $\mu_{i+1}$.
\end{theorem}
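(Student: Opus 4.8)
The plan is to mimic the proof of Theorem \ref{thm:stiefel-transversal} in the flag case $\lambda = 0$ (Corollary \ref{cor:flag}), replacing the centerpoint $p_1$ of $\mu_1$ by the Yao--Yao center $C(\mu_1)$ as the common ``hub'' through which every transversal will pass. As there, I parametrize over full orthonormal frames $v = (v_1, \ldots, v_d) \in V_d(\rr^d)$ and apply the Stiefel Borsuk--Ulam theorem (Theorem \ref{thm:stiefel-BU}) with $k = d$. To reconcile the flag conventions I read the Yao--Yao basis in reverse, taking $(u_1, \ldots, u_d) = (v_d, \ldots, v_1)$; then the direction space $\operatorname{span}\{u_1,\ldots,u_i\}$ of $L_i$ is exactly the orthogonal complement of $V_{d-i} := \operatorname{span}\{v_1, \ldots, v_{d-i}\}$. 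Writing $\Pi_{d-i}$ for the orthogonal projection onto $V_{d-i}$, the desired conclusion that $L_i$ is a central $i$-transversal of $\mu_{i+1}$ is equivalent, by the standard reformulation of central transversals as centerpoints of projections onto the orthogonal complement (here of dimension $d-i$, so the guarantee $1/(d-i+1)$ matches), to the statement that $\Pi_{d-i}(C(\mu_1))$ is a centerpoint of $\Pi_{d-i}(\mu_{i+1})$ in $V_{d-i}$.

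With this translation, I set $C = C(\mu_1)$ and, for each $i = 1, \ldots, d-1$, let $p_{i+1}$ be the barycenter of the compact convex set of centerpoints of $\Pi_{d-i}(\mu_{i+1})$ in $V_{d-i}$. I then define $f : V_d(\rr^d) \to \rr^{d-1}\times\cdots\times\rr^0$ exactly as in Theorem \ref{thm:stiefel-transversal}, with $p_1$ replaced by $C$:
\[
x_j = \begin{bmatrix} \langle v_j, p_2 - C\rangle \\ \vdots \\ \langle v_j, p_{d+1-j} - C\rangle \end{bmatrix} \in \rr^{d-j}.
\]
Note that $\mu_1$ enters only through the hub $C$, so no centerpoint condition is imposed on $\mu_1$ itself, consistent with the theorem claiming transversals only for $i = 1,\ldots,d-1$. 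Chasing the zero set is then identical to Theorem \ref{thm:stiefel-transversal}: at a zero one has $\langle v_j, p_{i+1} - C\rangle = 0$ for all $j \le d-i$, whence $\Pi_{d-i}(C) = p_{i+1}$, which is precisely the centerpoint condition above and yields the theorem. This substitution is exactly the freedom anticipated in the remark following Corollary \ref{cor:flag}, namely that a point depending continuously and equivariantly on the frame may replace $p_1$.

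The one genuinely new point, and the step I expect to be the main obstacle, is verifying that $f$ is $(\zz_2)^d$-equivariant. The centerpoints $p_{i+1}$ cause no difficulty, since each depends only on the subspace $V_{d-i}$, which is unchanged when a frame vector is negated; thus each coordinate $\langle v_j, p_m - C\rangle$ flips sign precisely when $v_j$ does, provided the hub $C = C(\mu_1)$ is itself invariant under every sign flip $v_\ell \mapsto -v_\ell$. I would establish this invariance of the Yao--Yao center by induction on $d$ along the recursive construction: for $d=1$ the center is the bisection point, which ignores the orientation of $u_1$; for $d \ge 2$ the halving horizontal hyperplane depends only on the line $\{\pm u_d\}$, negating $u_d$ merely interchanges $\mu_+$ and $\mu_-$ and leaves the common projected center where it was, and negating any $u_\ell$ with $\ell < d$ leaves the center fixed by the inductive hypothesis applied inside $H$. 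Continuity of $C(\mu_1)$ in the frame, needed so that $f$ is continuous, follows from the continuity of the Yao--Yao construction in its data recorded when the partition was introduced. Once equivariance and continuity are in place, Theorem \ref{thm:stiefel-BU} supplies the required zero and completes the argument.
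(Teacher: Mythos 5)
Your proposal is correct and follows essentially the same route as the paper, which proves Theorem \ref{thm:yao-transversal} by running the proof of Theorem \ref{thm:stiefel-transversal} verbatim over $V_d(\rr^d)$ with $p_1$ replaced by the Yao--Yao center of $\mu_1$ with respect to the reversed basis $(v_d,\ldots,v_1)$. The only difference is that you explicitly verify the $(\zz_2)^d$-invariance of the Yao--Yao center under sign flips of the frame vectors (and hence the equivariance of $f$), a point the paper leaves implicit via its remark after Corollary \ref{cor:flag}; your inductive verification of that invariance is sound.
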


Due to Lemma \ref{lem:skeleton}, each $L_i$ is contained in the union of the $i$-skeletons of the parts of the Yao--Yao partition of $\mu_1$ we constructed, so these spaces appear naturally.

\begin{proof}
We follow verbatim the proof of Theorem \ref{thm:stiefel-transversal} with the only difference being that for each $v=(v_1, \ldots, v_d) \in V_d(\rr^d)$ we take $p_1$ to be the Yao--Yao center of $\mu_1$ according to the basis $(u_1, u_2, \ldots, u_d) = (v_d, v_{d-1}, \ldots, v_1)$.  For $i=1,\ldots, d-1$, the point $p_{i+1}$ is still the centerpoint of the projection of $\mu_{i+1}$ onto $\operatorname{span}\{v_1,\ldots, v_{d-i}\}$.
\end{proof}

Just as the theorem above is closely related to Corollary \ref{cor:flag}, it is clear we can get an analogous extension of Theorem \ref{thm:stiefel-transversal} for any $\lambda$.  We do this by choosing $p_1$ to be the Yao--Yao center of the projection of $\mu_1$ onto $\operatorname{span}\{v_1, \ldots, v_k\}$ induced by the basis $(v_k,\ldots, v_1)$.  Any half-space that contains $L_{\lambda}$ would avoid the interior of one of the regions of the initial Yao--Yao partition constructed.

\section{Additional remarks}\label{sect:remarks}

Theorem \ref{thm:generalyao} gives an exact bound for $d=1$, but it is unclear whether this is the best that can be done for $d\geq2$. The only lower bound for Problem \ref{problem:generalizedyao} was proven by Rold\'an-Pensado and Sober\'on who showed that $\displaystyle n \geq 2^{\frac{d}{2} - 1}$ for $t=1$ \cite{roldan2014extension}. Besides this result, the following question remains open and relatively unexplored. 

\begin{problem}\label{problem:lowerboundn}
Let $t, d$ be positive integers. Let $n$ be the smallest value such that for any finite measure $\mu$ in $\R^d$ that is absolutely continuous with respect to the Lebesgue measure there exists a convex equipartition into $n$ parts such that every hyperplane avoids the interior of at least $t$ regions. What is the lower bound for $n$?
\end{problem}

In the case of $d=2$, we offer multiple constructions that match the current best upper bound. Looking at the complexity of a partition is not sufficient to give us a lower bound on $n$ in general.  However, it allows us to bound $t$ in terms of $n$. 

\begin{problem}\label{problem:greatesttforn}
Given $n, d$ find the largest $t$ such that for any finite measure $\mu$ in $\R^d$ that is absolutely continuous with respect to the Lebesgue measure there exists a convex equipartition into $n$ parts such that every line misses the interior of at least $t$ parts.
\end{problem}

Let $k$ be the complexity for a convex partition of $\rr^d$.  As we discussed in the previous sections, an upper bound on the complexity of an equipartition into $n$ parts gives us a lower bound on $t$ because $(n - (k + 1)) \leq t$. The following problem becomes relevant.

\begin{problem}\label{problem:exactk}
What is the smallest value of $k$ such that for every finite absolutely continuous measure $\mu$ in $\rr^d$ there exists a convex equipartition of $\mu$ with complexity at most $k$?
\end{problem}

Equipartitions impose greater geometric restrictions on possible partitions as Buck and Buck showed that there is no equipartition of $7$ regions of complexity $3$ \cite{buck1949equipartition}. Therefore, it seems that in order to prove an exact bound on $k$ the geometry of the equipartitions should be considered.

It also seems that the original construction of Yao and Yao can be used to obtain more mass partition results.  For example, consider the following proof of a special case of Theorem \ref{thm:two-hyperplanes} for $d=3$.

\begin{lemma}
Let $\mu_1, \mu_2$ be two finite absolutely continuous measures in $\rr^3$ whose supports can be separated by a plane $H$.  Then, there exist two planes $H_1, H_2$ such that $H_1 \cup H_2$ splits $\mu_1$ into four equal parts and $H_1 \cap H_2$ is a central line for $\mu_2$.
\end{lemma}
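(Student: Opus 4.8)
The plan is to give a direct Yao--Yao-style argument rather than invoke the Stiefel machinery of Theorem~\ref{thm:two-hyperplanes}, using the separating plane to make the relevant topology elementary. Fix coordinates so that the separating plane is $\{x_3=0\}$, with $\mu_1$ supported in $\{x_3>0\}$ and $\mu_2$ in $\{x_3<0\}$. I would parametrize candidate solutions by the direction $u\in S^2$ of the intersection line $\ell=H_1\cap H_2$. For each $u$, orthogonally project $\mu_1$ onto the plane $W_u=u^{\perp}$ to obtain $\sigma_1(u):=\Pi_{W_u}(\mu_1)$; by Theorem~\ref{thm:yaoyao} in $W_u\cong\rr^2$ this measure has a (basis-dependent) Yao--Yao equipartition into four cones, whose two bounding lines pull back under $\Pi_{W_u}$ to planes $H_1(u),H_2(u)$ that split $\mu_1$ into four equal parts, meeting along the line $\ell(u)=\Pi_{W_u}^{-1}(c_1(u))$, where $c_1(u)\in W_u$ is the Yao--Yao center. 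The first bullet is then automatic for every $u$, and the whole problem reduces to choosing $u$ correctly. Here one uses the key equivalence: since every plane containing $\ell(u)$ corresponds to a half-plane of $W_u$ through $c_1(u)$, the line $\ell(u)$ is a central line of $\mu_2$ (each containing half-space has mass $\ge\frac13$) if and only if $c_1(u)$ is a centerpoint of $\sigma_2(u):=\Pi_{W_u}(\mu_2)$.

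Next I would turn ``$c_1(u)$ is a centerpoint of $\sigma_2(u)$'' into a zero-finding problem. Let $q_2(u)$ be the barycenter of the (nonempty, compact, convex, continuously varying) centerpoint region of $\sigma_2(u)$; it suffices to find $u$ with $c_1(u)=q_2(u)$, i.e.\ a zero of the tangent field
\[
\hat\delta(u)\ :=\ c_1(u)-q_2(u)\ \in\ u^{\perp}=T_uS^2 .
\]
The obstruction to running this is that the Yao--Yao center $c_1(u)$ requires an \emph{ordered} orthonormal basis of $W_u$, i.e.\ a tangent frame on $S^2$, which cannot be chosen globally. This is exactly where the separation hypothesis enters: the normal $e_3$ of the separating plane supplies a canonical tangent direction $\tau(u)=\Pi_{u^\perp}(e_3)$, nonzero away from the two poles $\pm e_3$, with which I fix the basis and define $c_1(u)$ continuously on $S^2\setminus\{\pm e_3\}$. (Note $q_2(u)$, being a centerpoint, is basis-free and extends continuously through the poles.) Restricting to the closed northern hemisphere $B=\{u:u_3\ge 0\}$, separation has a second, decisive consequence: on the equator $\partial B$ we have $u\perp e_3$, so $W_u\ni e_3$, and projecting along the horizontal direction $u$ preserves the sign of $x_3$; hence $\sigma_1(u)$ lives in the upper half of $W_u$ and $\sigma_2(u)$ in the lower half. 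Consequently $c_1(u)$ sits above and $q_2(u)$ below the line $W_u\cap\{x_3=0\}$, giving the boundary condition $\langle \hat\delta(u),e_3\rangle>0$ for every $u\in\partial B$.

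Finally, I would conclude by a degree argument in the spirit of Yao and Yao's own map $B^{d-1}\to S^{d-2}$. The boundary field $\tau|_{\partial B}=e_3$ is the restriction of the field $\tau=\Pi_{u^\perp}(e_3)$, which vanishes on $B$ only at the pole with index $+1$, so $\tau|_{\partial B}$ has winding number $+1$ in a trivialization of $TB$ (recall $B$ is a disk). Since $\hat\delta|_{\partial B}$ and $\tau|_{\partial B}$ both point into the northern hemisphere, the straight-line homotopy between them stays nonvanishing, so $\hat\delta|_{\partial B}$ also has winding $+1\neq 0$; a tangent field on the disk with nonzero boundary winding must vanish in the interior, producing a direction $u^\ast$ with $c_1(u^\ast)=q_2(u^\ast)$, and hence the desired planes (the case where the only forced zero degenerates to the pole $u^\ast=e_3$ yields the vertical-line solution directly). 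The hard part will be this topological bookkeeping: rigorously establishing continuity of $c_1$ and $q_2$ in $u$, controlling the behavior of $\hat\delta$ as $u\to\pm e_3$ (where the canonical frame $\tau$ degenerates and the Yao--Yao center is computed against a spinning basis), and confirming that the separation-induced boundary sign forces the winding to be nonzero rather than merely homotopically trivial. The content of the lemma is precisely that the separating plane converts this index computation into the elementary one above, whereas without separation one is driven back to the Stiefel-manifold argument of Theorem~\ref{thm:stiefel-transversal}.
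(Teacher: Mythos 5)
Your overall strategy (reduce to finding a direction $u$ where the planar Yao--Yao center of the projection of $\mu_1$ coincides with a centerpoint of the projection of $\mu_2$) is sound, and your boundary analysis on the equator is essentially correct. But there is a genuine gap at the pole $u=e_3$. The field $\hat\delta$ is simply not defined there: the frame $\tau(u)=\Pi_{u^\perp}(e_3)$ vanishes, and the $2$-dimensional Yao--Yao center has an essential dependence on the choice of ordered basis, so $\hat\delta$ does not extend continuously to $e_3$ (as $u$ circles the pole, the frame spins once and $c_1(u)$ runs along the closed curve of Yao--Yao centers of $\sigma_1(e_3)$ over all bases). Consequently your concluding step --- ``a tangent field on the disk with nonzero boundary winding must vanish in the interior'' --- does not apply: for a field with a puncture, the boundary winding equals the sum of the indices of its zeros \emph{plus} the winding around the puncture, and you have no control over the latter; if it happens to be $+1$, nothing is forced. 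Your parenthetical escape hatch (``the case where the only forced zero degenerates to the pole yields the vertical-line solution directly'') is not justified: at the pole there is no canonical Yao--Yao center, and the assertion that \emph{some} basis of the horizontal plane produces a Yao--Yao center of $\sigma_1(e_3)$ that is simultaneously a centerpoint of $\sigma_2(e_3)$ is a nontrivial claim of exactly the same kind as the lemma itself, not something that follows directly. (A smaller omission: ``consequently $c_1(u)$ sits above'' on the equator also needs an argument --- e.g.\ that all four cones of a planar Yao--Yao partition have positive mass, so the apex must lie in the open half-plane carrying the support --- though this one is routine.)

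It is worth seeing how the paper's proof dodges this entirely, because the difference is precisely in the parametrization. The paper fixes the separating plane $H$ and a basis $(u_1,u_2)$ of $H$ once and for all, and varies the \emph{oblique} projection direction $v\notin H$: it compares the Yao--Yao center of $p_v(\mu_1)$ with the centerpoint of $p_v(\mu_2)$, both living in the fixed plane $H$ with the fixed basis, so no frame ever has to be chosen on a varying plane and no hairy-ball obstruction appears. Separation is used only for the boundary behavior: as $v$ tends to a horizontal direction $w$, the projections of $\mu_1$ and $\mu_2$ escape to infinity in the opposite directions $-w$ and $+w$, so the normalized difference map extends to the equator as $v\mapsto -v$, and Yao and Yao's degree argument applies verbatim; the solution planes are then the $v$-extensions of the two lines of the planar Yao--Yao partition. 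Your route is likely reparable --- since the planar Yao--Yao center depends on the ordered basis only modulo rotation by $\pi$, the winding of $\hat\delta$ around the pole is even whenever it is defined, hence never equals the boundary winding $+1$, which forces a zero in the punctured disk unless $q_2(e_3)$ lies on the curve of Yao--Yao centers, and that exceptional case is exactly the vertical-line solution --- but this limit analysis and case split is the crux of your argument and is missing from the proposal.
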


Before showing the proof, notice the following property of the result above.  Given a half-space $M$, if the bounding plane of $M$ hits $H_1 \cap H_2$ on the side of $H$ of $\mu_1$, then $\mu_2(M) \ge \mu_2(\rr^3)/3$.  If it hits $H_1 \cap H_2$ on the side of $H$ of $\mu_2$, then $M$ contains one of the four regions that of the equipartition of $\mu_1$.

\begin{proof}
Assume without loss of generality that $H$ his a horizontal plane, and choose a basis $u_1, u_2$ of $H$ arbitrarily.  For a vector $v \not\in \operatorname{span}\{u_1, u_2\}$, let $C_v(\mu_1)$ be the Yao--Yao center of $p_v(\mu_1)$ with respect to $u_1, u_2$.   Let $C_v(\mu_2)$ be the centerpoint of $p_v(\mu_2)$.

The exact same arguments of Yao and Yao show that, up to scalar multiplication, there exists a unique vector $v$ for which $C_v(\mu_1) = C_v(\mu_2)$ (the separation of the supports of $\mu_1, \mu_2$ by $H$ is needed for this).  The Yao--Yao partition of $p_v(\mu_1)$ with respect to $u_1, u_2$ consists of two lines.  If we extend these two lines by the direction $v$, we obtain $H_1$ and $H_2$.
\end{proof}

As a final remark, we compare Corollary \ref{coro-line-miss-many} to a similar problem regarding Yao--Yao type partitions for more measures.

\begin{problem}[Problem 3.3.3 in \cite{roldan2021survey} for lines]
Let $d$ be a positive integer.  Find the smallest positive integer $n$ such that the following holds.  For any $d-1$ absolutely continuous probability measures $\mu_1, \dots, \mu_{d-1}$ in $\rr^d$ there exists a convex partition $C_1, \dots C_n$ of $\rr^d$ such that
	\[
	\mu_i (C_j) = \frac{1}{n} \qquad \text{for } i=1,\dots, d-1 \quad j= 1,\dots, n
	\]
	and every line misses the interior of at least one $C_j$.
\end{problem}

Corollary \ref{coro-line-miss-many} shows that the problem above is meaningful even if we have few measures.  The problem above shows that we might expect results of this kind for up to $d-1$ measures simultaneously.  Determining how the number of regions we are guaranteed to miss with a line decreases as we increase the number of measures in the equipartition is an interesting problem.

\begin{bibdiv}
\begin{biblist}

\bib{alon2005crossing}{article}{
      author={Alon, Noga},
      author={Pach, J{\'a}nos},
      author={Pinchasi, Rom},
      author={Radoi{\v{c}}i{\'c}, Rado{\v{s}}},
      author={Sharir, Micha},
       title={Crossing patterns of semi-algebraic sets},
        date={2005},
     journal={Journal of Combinatorial Theory, Series A},
      volume={111},
      number={2},
       pages={310\ndash 326},
}

\bib{barany1980borsuk}{article}{
      author={B{\'a}r{\'a}ny, Imre},
       title={Borsuk's theorem through complementary pivoting},
        date={1980},
     journal={Mathematical Programming},
      volume={18},
      number={1},
       pages={84\ndash 88},
}

\bib{buck1949equipartition}{article}{
      author={Buck, Robert~C},
      author={Buck, Ellen~F},
       title={Equipartition of convex sets},
        date={1949},
     journal={Mathematics Magazine},
      volume={22},
      number={4},
       pages={195\ndash 198},
}

\bib{Blagojevic:2018jc}{article}{
      author={Blagojevi{\'c}, Pavle V.~M.},
      author={Frick, Florian},
      author={Haase, Albert},
      author={Ziegler, G{\"u}nter~M.},
       title={Topology of the {G}r{\"u}nbaum--{H}adwiger--{R}amos hyperplane
  mass partition problem},
        date={2018},
     journal={Transactions of the American Mathematical Society},
      volume={370},
      number={10},
       pages={6795\ndash 6824},
}

\bib{barany2008slicing}{article}{
      author={B{\'a}r{\'a}ny, Imre},
      author={Hubard, Alfredo},
      author={Jer{\'o}nimo, Jes{\'u}s},
       title={Slicing convex sets and measures by a hyperplane},
        date={2008},
     journal={Discrete \& Computational Geometry},
      volume={39},
      number={1-3},
       pages={67\ndash 75},
}

\bib{barba2019sharing}{article}{
      author={Barba, Luis},
      author={Pilz, Alexander},
      author={Schnider, Patrick},
       title={Sharing a pizza: bisecting masses with two cuts},
        date={2019},
     journal={arXiv preprint arXiv:1904.02502},
}

\bib{Blagojevic:2018gt}{article}{
      author={Blagojevi{\'c}, Pavle V.~M.},
      author={Sober{\'o}n, Pablo},
       title={Thieves can make sandwiches},
        date={2018},
     journal={Bulletin of the London Mathematical Society},
      volume={50},
      number={1},
       pages={108\ndash 123},
}

\bib{chan2020borsuk}{article}{
      author={Chan, Yu~Hin},
      author={Chen, Shujian},
      author={Frick, Florian},
      author={Hull, J~Tristan},
       title={{Borsuk-Ulam theorems for products of spheres and Stiefel
  manifolds revisited}},
        date={2020},
     journal={Topological Methods in Nonlinear Analysis},
      volume={55},
      number={2},
       pages={553\ndash 564},
}

\bib{deLongueville:2006uo}{article}{
      author={De~Longueville, Mark},
      author={{\v Z}ivaljevi{\'c}, Rade~T.},
       title={Splitting multidimensional necklaces},
        date={2008},
     journal={Advances in Mathematics},
      volume={218},
      number={3},
       pages={926\ndash 939},
}

\bib{dol1992generalization}{article}{
      author={Dol'nikov, Vladimir~Leonidovich},
       title={A generalization of the ham sandwich theorem},
        date={1992},
     journal={Mathematical Notes},
      volume={52},
      number={2},
       pages={771\ndash 779},
}

\bib{Fadell:1988tm}{article}{
      author={Fadell, Edward},
      author={Husseini, Sufian},
       title={An ideal-valued cohomological index theory with applications to
  {B}orsuk--{U}lam and {B}ourgin--{Y}ang theorems},
        date={1988},
     journal={Ergodic Theory and Dynamical Systems},
      volume={8},
       pages={73\ndash 85},
}

\bib{FHMRA19}{article}{
      author={Fradelizi, Matthieu},
      author={Hubard, Alfredo},
      author={Meyer, Mathieu},
      author={R{old{\'a}n-Pensado}, Edgardo},
      author={Zvavitch, Artem},
       title={{Equipartitions and Mahler volumes of symmetric convex bodies}},
        date={2019},
     journal={arXiv preprint arXiv:1904.10765},
}

\bib{guillemin2010differential}{book}{
      author={Guillemin, Victor},
      author={Pollack, Alan},
       title={Differential topology},
   publisher={American Mathematical Society},
        date={2010},
      volume={370},
}

\bib{grunbaum1960partitions}{article}{
      author={Gr{\"u}nbaum, Branko},
      author={others},
       title={Partitions of mass-distributions and of convex bodies by
  hyperplanes.},
        date={1960},
     journal={Pacific Journal of Mathematics},
      volume={10},
      number={4},
       pages={1257\ndash 1261},
}

\bib{Hadwiger1966}{article}{
      author={Hadwiger, Hugo},
       title={Simultane {V}ierteilung zweier {K}{\"o}rper},
        date={1966},
     journal={Archiv der Mathematik},
      volume={17},
      number={3},
       pages={274\ndash 278},
}

\bib{Hubard:2019we}{article}{
      author={Hubard, Alfredo},
      author={Karasev, Roman~N.},
       title={Bisecting measures with hyperplane arrangements},
        date={2020},
     journal={Mathematical Proceedings of the Cambridge Philosophical Society},
      volume={169},
      number={3},
       pages={639\ndash 647},
}

\bib{Karasev:2016cn}{article}{
      author={Karasev, Roman~N.},
      author={Rold{\'a}n-Pensado, Edgardo},
      author={Sober{\'o}n, Pablo},
       title={Measure partitions using hyperplanes with fixed directions},
        date={2016},
     journal={Israel journal of mathematics},
      volume={212},
      number={2},
       pages={705\ndash 728},
}

\bib{lehec2009yao}{article}{
      author={Lehec, Joseph},
       title={{On the Yao--Yao partition theorem}},
        date={2009},
     journal={Archiv der Mathematik},
      volume={92},
      number={4},
       pages={366\ndash 376},
}

\bib{matousek2003using}{book}{
      author={Matou{\v s}ek, Ji{\v{r}}{\'\i}},
       title={Using the {B}orsuk--{U}lam theorem: lectures on topological
  methods in combinatorics and geometry},
   publisher={Springer},
        date={2003},
}

\bib{Mus12}{article}{
      author={Musin, Oleg},
       title={{Borsuk--Ulam type theorems for manifolds}},
        date={2012},
     journal={Proceedings of the American Mathematical Society},
      volume={140},
      number={7},
       pages={2551\ndash 2560},
}

\bib{Ramos:1996dm}{article}{
      author={Ramos, Edgar~Arturo},
       title={Equipartition of mass distributions by hyperplanes},
        date={1996},
     journal={Discrete {\&} Computational Geometry},
      volume={15},
      number={2},
       pages={147\ndash 167},
}

\bib{roldan2014extension}{article}{
      author={R{old{\'a}n-Pensado}, Edgardo},
      author={Sober{\'o}n, Pablo},
       title={{An extension of a theorem of Yao and Yao}},
        date={2014},
     journal={Discrete \& Computational Geometry},
      volume={51},
      number={2},
       pages={285\ndash 299},
}

\bib{roldan2021survey}{article}{
      author={R{old{\'a}n-Pensado}, Edgardo},
      author={Sober{\'o}n, Pablo},
       title={A survey of mass partitions},
        date={2021},
     journal={Bulletin of the American Mathematical Society},
        note={Electronically published on February 24, 2021, DOI:
  https://doi.org/10.1090/bull/1725 (to appear in print).},
}

\bib{Steinhaus1938}{article}{
      author={Steinhaus, Hugo},
       title={A note on the ham sandwich theorem},
        date={1938},
     journal={Mathesis Polska},
      volume={9},
       pages={26\ndash 28},
}

\bib{thom1954quelques}{article}{
      author={Thom, Ren{\'e}},
       title={Quelques propri{\'e}t{\'e}s globales des vari{\'e}t{\'e}s
  diff{\'e}rentiables},
        date={1954},
     journal={Commentarii Mathematici Helvetici},
      volume={28},
      number={1},
       pages={17\ndash 86},
}

\bib{yao1985general}{inproceedings}{
      author={Yao, Andrew~C},
      author={Yao, F~Frances},
       title={A general approach to d-dimensional geometric queries},
        date={1985},
   booktitle={{Proceedings of the seventeenth annual ACM symposium on Theory of
  Computing}},
       pages={163\ndash 168},
}

\bib{Zivaljevic:2017vi}{incollection}{
      author={{\v{Z}}ivaljevi{\'c}, Rade~T.},
       title={Topological methods in discrete geometry},
        date={2017},
   booktitle={Handbook of discrete and computational geometry, third edition},
   publisher={CRC Press},
}

\bib{zivaljevic1990extension}{article}{
      author={{\v{Z}}ivaljevi{\'c}, Rade~T},
      author={Vre{\'c}ica, Sini{\v{s}}a~T},
       title={An extension of the ham sandwich theorem},
        date={1990},
     journal={Bulletin of the London Mathematical Society},
      volume={22},
      number={2},
       pages={183\ndash 186},
}

\end{biblist}
\end{bibdiv}

\end{document}